\crefname{section}{Section}{Sections}
\crefname{subsection}{\S}{\S\S}
\crefname{subsubsection}{\S}{\S\S}
\theoremstyle{plain}
\newtheorem{lemma}{Lemma}[section]
\newtheorem{proposition}[lemma]{Proposition}
\newtheorem{corollary}[lemma]{Corollary}
\newtheorem{theorem}[lemma]{Theorem}
\theoremstyle{plain}
\newtheorem{theoremN}{Theorem}
\newtheorem{propositionN}[theoremN]{Proposition}
\theoremstyle{plain}
\newtheorem{definition}[lemma]{Definition}
\newtheorem{example}[lemma]{Example}
\newtheorem{examples}[lemma]{Examples}
\newtheorem{remark}[lemma]{Remark}
\newtheorem{remarks}[lemma]{Remarks}
\newtheorem{notation}[lemma]{Notation}
\crefname{definition}{definition}{definitions}
\crefname{ex}{example}{examples}
\crefname{exs}{example}{examples}
\crefname{remark}{remark}{remarks}
\crefname{remarks}{remark}{remarks}
\crefname{convention}{convention}{conventions}
\crefname{notation}{notation}{notations}
\crefname{table}{table}{tables}
\crefname{lemma}{lemma}{lemmas}
\crefname{proposition}{proposition}{propositions}
\crefname{propositionN}{proposition}{propositions}
\crefname{corollary}{corollary}{corollaries}
\crefname{corollaryN}{corollary}{corollaries}
\crefname{theorem}{theorem}{theorems}
\crefname{theoremN}{theorem}{theorems}
\crefname{enumi}{}{}
\crefname{assumption}{assumption}{Assumptions}
\crefname{construction}{construction}{Constructions}
\crefname{question}{question}{Questions}
\crefname{equation}{}{}
\numberwithin{equation}{section}
\theoremstyle{nonumberplain}
\newtheorem{proof}{Proof}
\newcommand\bF{{\mathbb F}}
\newcommand\bG{{\mathbb G}}
\newcommand\bH{{\mathbb H}}
\newcommand\bK{{\mathbb K}}
\newcommand\bL{{\mathbb L}}
\newcommand\bQ{{\mathbb Q}}
\newcommand\bR{{\mathbb R}}
\newcommand\bT{{\mathbb T}}
\newcommand\bZ{{\mathbb Z}}
\newcommand\cA{{\mathcal A}}
\newcommand\cE{{\mathcal E}}
\newcommand\cF{{\mathcal F}}
\newcommand\cG{{\mathcal G}}
\newcommand\cH{{\mathcal H}}
\newcommand\cM{{\mathcal M}}
\newcommand\cS{{\mathcal S}}
\newcommand\mc{\mathrm{c}}
\DeclareMathOperator{\id}{id}
\DeclareMathOperator{\im}{\mathrm{im}}
\DeclareMathOperator{\Supp}{\mathrm{Supp}}
\newcommand{\cat}[1]{\textsc{#1}}
\newcommand\spr[1]{\cite[\href{https://stacks.math.columbia.edu/tag/#1}{Tag {#1}}]{stacks-project}}
\newcommand{\qedhere}{\mbox{}\hfill\ensuremath{\blacksquare}}
\newcommand{\xrightarrowdbl}[2][]{%
  \xrightarrow[#1]{#2}\mathrel{\mkern-14mu}\rightarrow
}
\title{Proper actions and supported-section-valued cohomology}
\author{Alexandru Chirvasitu}
\begin{document}

\date{}

\newcommand{\Addresses}{{
  \bigskip
  \footnotesize

  \textsc{Department of Mathematics, University at Buffalo}
  \par\nopagebreak
  \textsc{Buffalo, NY 14260-2900, USA}  
  \par\nopagebreak
  \textit{E-mail address}: \texttt{achirvas@buffalo.edu}


}}

\maketitle

\begin{abstract}
  Consider a proper action of $\mathbb{Z}^d$ on a smooth (perhaps non-paracompact) manifold $M$. The $p^{th}$ cohomology $H^p(\mathbb{Z}^d,\ \Gamma_{\mathrm{c}}(\mathcal{F}))$ valued in the space of compactly-supported sections of a natural sheaf $\mathcal{F}$ on $M$ (such as those of smooth function germs, smooth $k$-form germs, etc.) vanishes for $p\ne d$ (the cohomological dimension of $\mathbb{Z}^d$) and, at $d$, equals the space of compactly-supported sections of the descent ($\mathbb{G}$-invariant push-forward) $\mathcal{F}/\mathbb{Z}^d$ to the orbifold quotient $M/\mathbb{Z}^d$. We prove this and analogous results on $\mathbb{Z}^d$ cohomology valued in $\Phi$-supported sections of an equivariant appropriately soft sheaf $\mathcal{F}$ in a broader context of $\mathbb{Z}^d$-actions proper with respect to a paracompactifying family of supports $\Phi$, in the sense that every member of $\Phi$ has a neighborhood small with respect to the action in Palais' sense.
\end{abstract}

\noindent {\em Key words:
  Lyndon-Hochschild-Serre spectral sequence;
  Tychonoff space;
  family of supports;
  group cohomology;
  paracompactifying;
  proper action;
  sheaf cohomology;
  soft sheaf

}

\vspace{.5cm}

\noindent{MSC 2020:
  20J06; 
  18G40; 
  55T10; 
  54D20; 
  54D15; 
  54B15; 
  55N30; 
  18F20 

}


\section*{Introduction}

One piece of motivation for the present paper lies in the types of smooth-dynamics considerations pertinent to \cite{2408.15053v3}. The focus there is on smooth flows $M\times \bR\xrightarrow{\alpha} M$ for compact smooth $M$, the central concern being the (non)invertibility of the integral operators
\begin{equation*}
  \frac 1s\int_0^s\alpha_t\ \mathrm{d}t
  ,\quad
  s\in \bR_{>0}
\end{equation*}
on $C^{\infty}(M)$ (symbolically conflating the diffeomorphisms $\alpha_t$ and the automorphisms they induce on the topological vector space $C^{\infty}(M)$). A brief and rough sketch of how \cite[Theorem 3.10(1)]{2408.15053v3} addresses the problem when the flow has at least one non-periodic \emph{locally closed} orbit $\alpha(p,\bR)\subset M$ (i.e. \cite[Problem 2.7.1]{eng_top_1989} one that is open in its closure) is as follows:
\begin{enumerate}[(a),wide]
\item\label{item:add} identify the flow on said orbit with the usual translation flow $\bR\times \bR\xrightarrow{+}\bR$ on the real line;

\item\label{item:cpct.supp} note that the assumed local closure implies that all smooth compactly-supported functions on $\bR$ restrict from smooth functions on $M$;

\item\label{item:cpct.supp.all} observe that for any fixed $s>0$ the smooth compactly-supported $f$ are precisely the functions of the form
  \begin{equation*}
    f=F(\bullet+s)-F
    ,\quad
    F\in C^{\infty}(\bR)
    ,\quad
    F\text{ $s$-periodic close to either }\pm\infty;
  \end{equation*}

\item\label{item:cpct.supp.some} whereas by contrast, for (Lebesgue-)almost all $s$ the functions
  \begin{equation*}
    f\in C_{\mc}(\bR)\cap\left(\im\beta_s\bigg|_{\bR}\right)    
  \end{equation*}
  are of the form $F(\bullet+s)-F$ for $F\in C_{\mc}^{\infty}(\bR)$ (compactly-supported, i.e. eventually \emph{vanishing} at $\pm\infty$). 
\end{enumerate}
In terms of the $\bZ$-action on $\bR$ given by translation by $s$, associating $F(\bullet+s)-F$ to $F\in C^{\infty}(\bR)$ is the \emph{coboundary} map \cite[Application 6.5.5]{weib_halg}
\begin{equation*}
  C^{\infty}(\bR)
  \cong
  \text{0-cochains }
  C^0\left(\bZ,\ C^{\infty}(\bR)\right)  
  \xrightarrow{\quad\quad}
  B^1\left(\bZ,\ C^{\infty}(\bR)\right)
  :=
  \text{1-coboundaries}
\end{equation*}
familiar from group cohomology. One way to cast the above-noted discrepancy between \Cref{item:cpct.supp.all} and \Cref{item:cpct.supp.some} is as the non-triviality of the cohomology group $H^1\left(\bZ,\ C_{\mc}^{\infty}(\bR)\right)$. Placing such cohomology-non-vanishing results in some surrounding context is what forms the object of the paper.

In parallel to the translation action of $\bZ$ on $\bR$, most actions discussed below will be 
\begin{itemize}[wide]
\item occasionally \emph{free} ($xg=x\Rightarrow g=1$);

\item and very often \emph{proper} in the sense of \cite[\S III.4.1, Definition 1]{bourb_top_en_1}: the continuous map
  \begin{equation*}
    X\times \bG
    \ni
    (x,g)
    \xmapsto{\quad\varphi\quad}
    (xg,x)
    \in
    X^2
  \end{equation*}
  is \emph{proper}, i.e. \cite[\S I.10.1, Definition 1]{bourb_top_en_1}
  \begin{equation*}
    \forall\text{ topological }Z
    \quad:\quad
    X\times\bG\times Z
    \xrightarrow[\quad\text{is closed}\quad]{\quad\varphi\times\id_Z\quad}
    X\times X\times Z.
  \end{equation*}
  Equivalently \cite[\S I.10.2, Theorem 1]{bourb_top_en_1}, a map is proper if it is closed and point preimages are compact (Hausdorff or not). 
\end{itemize}

The above-mentioned non-triviality $H^1\left(\bZ,\ C_{\mc}^{\infty}(\bR)\right)\ne 0$ is an instance of \Cref{th:free.prop.zd.cpct} below; we recall the germane sheaf-theoretic notions (\emph{softness} and \emph{support families} \cite[Definitions I.6.1 and II.9.1]{bred_shf_2e_1997}) in \Cref{se:cpct}.

\begin{theoremN}\label{thn:intro.cpct.supp.zd}
  Let $X\times \bZ^d\to X$ be a proper action on a locally compact Hausdorff space and $\cF$ an equivariant abelian sheaf on $X$, soft with respect to the support family generated by
  \begin{equation*}
    \mc\bZ^d
    :=
    \left\{K\bZ^d\ :\ K\in \mc:=\text{compact subsets of }X\right\}
  \end{equation*}
  we have
  \begin{equation*}
    H^p\left(\bZ^d,\ \Gamma_{\mc}(\cF)\right)
    \cong
    \begin{cases}
      \Gamma_{\mc}\left(X/\bZ^d,\ (\pi_*\cF)^{\bZ^d}\right)
      &\text{if }p=d\\
      0
      &\text{otherwise},
    \end{cases}
  \end{equation*}
  where $X\xrightarrowdbl{\pi}X/\bZ^d$ is the quotient map.  \qedhere
\end{theoremN}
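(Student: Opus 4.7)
My plan is to induct on $d$, handling the $d=1$ base case by hand from the length-two resolution of $\bZ$ over $\bZ[\bZ]$ and climbing up via the Lyndon--Hochschild--Serre spectral sequence.

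\emph{Base case} ($d=1$). Group cohomology $H^*(\bZ,-)$ is computed by the two-term complex
\[
0 \longrightarrow \Gamma_{\mc}(\cF) \xrightarrow{\ T-1\ } \Gamma_{\mc}(\cF) \longrightarrow 0
\]
in degrees $0$ and $1$, where $T$ generates $\bZ$. Since $\bZ$ is torsion-free and proper actions have finite stabilizers, the action is free; each orbit is infinite and discrete, so the support of a compactly-supported invariant section --- a $\bZ$-invariant compact subset of $X$ --- must be empty, killing $H^0$. For $H^1 = \mathrm{coker}(T-1)$ I would introduce the orbit-sum map
\[
\sigma \colon \Gamma_{\mc}(\cF) \longrightarrow \Gamma_{\mc}\bigl(X/\bZ,\ (\pi_*\cF)^{\bZ}\bigr),\qquad f \longmapsto \sum_{n \in \bZ} T^n f,
\]
well-defined because properness forces $\{n : nK \cap \supp f \neq \emptyset\}$ finite for compact $K$; the sum is then locally finite and descends to an invariant section of compact support $\pi(\supp f)$. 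Surjectivity is where softness enters: an invariant compactly-supported $s$ on $X/\bZ$ pulls back to a $\bZ$-invariant section of $\cF$ on $\pi^{-1}(\supp s)=K\cdot\bZ\in\mc\bZ$, which softness with respect to the $\mc\bZ$-family lets me extend and then cut off against a single $\bZ$-translate of a fundamental compact to produce a compactly-supported $f$ with $\sigma(f)=s$. The containment $\ker\sigma \subseteq (T-1)\Gamma_{\mc}(\cF)$ uses the standard partial-sum trick: if $\sigma(f)=0$, then $g:=\sum_{n\ge 0} T^n f$ coincides with $-\sum_{n<0} T^n f$, so its support lies in the intersection of the forward and backward orbit-saturations of $\supp f$ --- compact by the same count --- and $(T-1)g=-f$.

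\emph{Induction step}. I run the LHS spectral sequence
\[
E_2^{p,q} = H^p\bigl(\bZ^{d-1},\ H^q(\bZ,\ \Gamma_{\mc}(\cF))\bigr)\ \Longrightarrow\ H^{p+q}\bigl(\bZ^d,\ \Gamma_{\mc}(\cF)\bigr)
\]
for the split extension $0\to \bZ \to \bZ^d \to \bZ^{d-1}\to 0$ projecting off the last coordinate. The base case kills every row but $q=1$, which by the identification above becomes $H^p\bigl(\bZ^{d-1},\ \Gamma_{\mc}(X/\bZ,(\pi_*\cF)^{\bZ})\bigr)$; the induction hypothesis, applied to the proper $\bZ^{d-1}$-action on $X/\bZ$ with descended sheaf $(\pi_*\cF)^{\bZ}$, then collapses this to $\Gamma_{\mc}\bigl(X/\bZ^d,(\pi_*\cF)^{\bZ^d}\bigr)$ in $p=d-1$ and to $0$ otherwise. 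Since only one row is nonzero no differentials can hit it, so $E_\infty=E_2$ and the total degrees give the claim.

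The chief obstacle I anticipate is transporting the sheaf-theoretic hypotheses down the tower: properness of the induced $\bZ^{d-1}$-action on $X/\bZ$ is a formal consequence of properness upstairs, but softness of the descent $(\pi_*\cF)^{\bZ}$ with respect to the $\mc\bZ^{d-1}$-family on $X/\bZ$ is more delicate. This, together with the extend-and-cut-off half of the surjectivity of $\sigma$, is where the softness assumption carries its full weight; I would handle both via the paracompactifying support-family formalism developed in \Cref{se:cpct}.
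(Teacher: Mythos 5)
Your overall architecture---induction on $d$ through Lyndon--Hochschild--Serre, with an explicit $d=1$ computation presenting $H^1$ as $\mathrm{coker}(T-1)$ and comparing it to $\Gamma_{\mc}(X/\bZ,\ \cF/\bZ)$ via the orbit-sum map---is the paper's: your $\sigma$ is exactly the periodization map \Cref{eq:periodization}, and your injectivity argument via the one-sided partial sum $g=\sum_{n\ge 0}T^nf$ is the paper's $\widetilde s$ of \Cref{eq:ftilde}. The one genuine gap is in the surjectivity of $\sigma$: you propose to extend the invariant section from $K\bZ$ by softness and then ``cut off against a single $\bZ$-translate of a fundamental compact.'' For a general soft \emph{abelian} sheaf no such cut-off exists: softness only provides extensions of sections defined on members of the support family, whereas truncating a given section to a smaller closed set amounts to multiplying by a bump function, i.e.\ requires a module structure over a soft sheaf of rings (that is precisely the extra hypothesis present in \Cref{cor:mfld} but absent from the theorem). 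Even granting some cut-off, the resulting $f$ satisfies $\sigma(f)=s$ only if the cut-offs assemble into a $\bZ$-\emph{invariant} partition of unity, and \Cref{res:soft.not.desc}\Cref{item:res:soft.not.desc:bo} together with \Cref{ex:pos.char} explain why such objects are not available in this generality.

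The paper's substitute uses extension alone, via a telescoping trick: choose a compact neighborhood $K'\supseteq K$ and $M$ with $K'\bZ_{\le -M}\cap K'\bZ_{\ge M}=\emptyset$, prescribe $\widetilde s$ to equal $s'$ on $K'\bZ_{\ge M}$, $0$ on $K'\bZ_{\le -M}$ and $0$ on $\partial K'\bZ$, extend globally by $\Supp(\mc\bZ)$-softness (the extension may be taken supported in $K'\bZ$), and set $f:=T\widetilde s-\widetilde s$. This $f$ is compactly supported because $\widetilde s$ is eventually invariant in both orbit directions, and the partial sums $\sum_{n=-N}^{N}T^nf=T^{N+1}\widetilde s-T^{-N}\widetilde s$ converge to $s'-0=s'$. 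You should replace your cut-off by this device. The remaining issue you flag---descending properness and softness to the intermediate quotients so that the induction hypothesis applies---is real but is exactly what the paper's \Cref{le:pass2quot.prpr}, \Cref{exs:push} and \Cref{cor:sweep.paraco} are designed to supply; your choice to peel off the inner $\bZ$ rather than $\bZ^{d-1}$ in the spectral sequence makes no essential difference there.
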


The recovery of $d$, the \emph{cohomological dimension} \cite[\S 9.9, p.591, Definition]{rot} of the acting group $\bZ^d$ as the only cohomology non-vanishing degree is reminiscent of the familiar computation of compactly-supported cohomology for $\bR^d$ (e.g. \cite[Corollary 4.7.1]{bt_forms}). The cohomological non-triviality recorded by \Cref{thn:intro.cpct.supp.zd} contrasts with a number of familiar \emph{a}cyclicity phenomena:
\begin{itemize}[wide]
\item For a free action $X\times \bG\to X$ of a \emph{compact} group on a locally compact Hausdorff space the analogous higher cohomology groups
  \begin{equation*}
    H^p\left(B_{\bG},\ \cH^0_{\mc}\left(X,\pi^*\cG\right)\right)
    ,\quad
    p\ge 1
    ,\quad
    B_{\bG}:=\text{\emph{classifying space} \cite[Definition 7.2.7]{hjjm_bdle} of $\bG$}
  \end{equation*}
  valued in the \emph{Leray sheaf} \cite[Definition IV.4.1]{bred_shf_2e_1997} attached to $X\xrightarrowdbl{\pi}X/\bG$ vanish under suitable softness assumptions on the sheaf $\cG$ on $X/\bG$, as observed more expansively in \Cref{re:zd.vs.cpct.lie}.
  
\item On the other hand, \Cref{res:soft.not.desc}\Cref{item:res:soft.not.desc:bo} records how the proof of \cite[Lemma 2.4]{zbMATH00825443} can be adapted to prove that
  \begin{equation*}
    H^p(\bG,\ \Gamma(\cF))=0
    ,\quad
    \forall p\ge 1
  \end{equation*}
  whenever the discrete group $\bG$ acts properly on a metrizable, locally compact, \emph{$\sigma$-compact} space $X$ (the latter condition means \cite[Problem 17I]{wil_top} $X$ is a countable union of compact subspaces) and $\cF$ is an abelian $\bG$-equivariant sheaf of modules over real-valued continuous germs.
\end{itemize}

Following up on the latter point, \Cref{pr:bo.gen} generalizes \cite[Lemma 2.4]{zbMATH00825443}, dispensing among other things with the requirement that the sheaves in question be $\bR$-linear.

\begin{propositionN}\label{prn:bo.gen}
  Let $\Phi\subseteq 2^X$ be a paracompactifying family of supports on a Hausdorff space $X$, $\bG$-invariant under a Bourbaki-proper action $X\times \bG\to X$ by a discrete group. Assume that
  \begin{itemize}[wide]
  \item the action is \emph{$\Phi$-proper} in the sense of \Cref{def:transv.2.supp}\Cref{item:def:transv.2.supp:phi};

  \item the abelian $\bG$-equivariant sheaf $\cF$ on $X$ is soft with respect to the family $\Supp(\Phi\bG)$ of supports generated by
    \begin{equation*}
      \left\{F\bG\ :\ F\in\Phi\right\};
    \end{equation*}

  \item the invariant push-forward $(\pi_*\cF)^{\bG}$ along $X\xrightarrowdbl{\pi}X/\bG$ is soft with respect to the family of supports generated by (the closed sets: \Cref{le:pass2quot.prpr}) $\pi(F)$, $F\in \Phi$;

  \item and
    \begin{equation*}
      H^i(\bG_x,\ \cF_x)=0
      ,\quad
      \forall i>0
      ,\quad
      \forall x\in X
    \end{equation*}
    for the respective isotropy groups
    \begin{equation*}
      \bG_x:=\left\{g\in \bG\ :\ xg=x\right\}.
    \end{equation*}
  \end{itemize}
  We then have $H^i(\bG,\ \Gamma_{\Supp(\Phi\bG)}(\cF))=0$ for $i>0$.  \qedhere
\end{propositionN}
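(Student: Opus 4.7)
The plan is to realize $H^\bullet\bigl(\bG,\Gamma_{\Supp(\Phi\bG)}(\cF)\bigr)$ as one edge of a Lyndon--Hochschild--Serre-type Grothendieck spectral sequence and to compute the common abutment via a second, Leray-type, spectral sequence that vanishes on its $E_2$-page under the stated hypotheses. Abbreviate $\Phi':=\Supp(\Phi\bG)$ and introduce
\begin{equation*}
    \Pi\colon\text{Sh}_{\bG}(X)\longrightarrow\text{Sh}(X/\bG),\quad \Pi(\cF):=(\pi_*\cF)^{\bG},
    \qquad
    T(\cF):=\Gamma_{\Phi'}(\cF)^{\bG}=\Gamma_{\pi(\Phi)}\bigl(\Pi(\cF)\bigr).
\end{equation*}
The second equality encodes the key geometric input: a $\bG$-invariant $\Phi\bG$-supported section of $\cF$ on $X$ is the same data as a $\pi(\Phi)$-supported section of $\Pi(\cF)$ on $X/\bG$, via $\pi^{-1}\bigl(\pi(F)\bigr)=F\bG$ for $\bG$-invariant $F$ and the closedness of $\pi$ on $\Phi$-sets afforded by \Cref{le:pass2quot.prpr}. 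Each factorization of $T$ yields, via Grothendieck's composition-of-functors theorem, a spectral sequence converging to $R^{\bullet}T(\cF)$.

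The first (LHS-type) spectral sequence reads
\begin{equation*}
    E_2^{p,q}(\mathrm{I}) = H^p\bigl(\bG,\ H^q_{\Phi'}(X,\cF)\bigr) \Longrightarrow R^{p+q}T(\cF).
\end{equation*}
Since $\Phi'$ is paracompactifying and $\cF$ is $\Phi'$-soft, \cite[Theorem II.9.11]{bred_shf_2e_1997} kills $H^q_{\Phi'}(X,\cF)$ for $q>0$, so (I) degenerates to $R^pT(\cF)\cong H^p\bigl(\bG,\Gamma_{\Phi'}(\cF)\bigr)$---precisely the group I need to annihilate for $p>0$. The second (Leray-type) spectral sequence reads
\begin{equation*}
    E_2^{p,q}(\mathrm{II}) = H^p_{\pi(\Phi)}\bigl(X/\bG,\ R^q\Pi(\cF)\bigr) \Longrightarrow R^{p+q}T(\cF).
\end{equation*}
The decisive stalk computation $\bigl(R^q\Pi(\cF)\bigr)_{[x]}\cong H^q(\bG_x,\cF_x)$, combined with the hypothesis $H^i(\bG_x,\cF_x)=0$ for $i>0$, kills all $q>0$ rows, and the assumed $\pi(\Phi)$-softness of $\Pi(\cF)$ then kills the remaining $p>0$ columns. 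Thus (II) collapses to zero in positive total degree and forces $R^pT(\cF)=0$ for $p>0$, which is the claimed $H^p\bigl(\bG,\Gamma_{\Phi'}(\cF)\bigr)=0$.

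The main obstacle is the stalk identification $\bigl(R^q\Pi(\cF)\bigr)_{[x]}\cong H^q(\bG_x,\cF_x)$. By Bourbaki-properness the discrete action has finite isotropies $\bG_x$, and $\Phi$-properness provides, for each $x$, arbitrarily small $\bG_x$-invariant open neighborhoods $U\ni x$ with $Ug\cap U=\emptyset$ for $g\notin\bG_x$ (Palais slices, available because Palais-smallness makes $\{g:Ug\cap U\ne\emptyset\}$ finite). For $V\subseteq\pi(U)$ a small open neighborhood of $[x]$ one has $\pi^{-1}(V)=\bigsqcup_{g\bG_x\in\bG/\bG_x}(V^{\mathrm{slice}})g$, so that $\cF(\pi^{-1}V)$ is coinduced as a $\bG$-module from the $\bG_x$-module $\cF(V^{\mathrm{slice}})$. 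Frobenius reciprocity $\bigl(\mathrm{Coind}_{\bG_x}^{\bG}-\bigr)^{\bG}\cong(-)^{\bG_x}$ reduces the computation of $\Pi$ on $\pi(U)$ to the finite-group invariant push-forward along $U\to U/\bG_x$, whose right-derived functors are classically known to have stalks $H^q(\bG_x,\cF_x)$ (the requisite commutation of $H^\bullet(\bG_x,-)$ with the filtered colimit computing the stalk being free since $\bG_x$ is finite). Inserting this into spectral sequence (II) closes the argument.
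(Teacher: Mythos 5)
Your proof is correct and is essentially the paper's own argument: the paper likewise runs the two Grothendieck spectral sequences for the composite functor $\Gamma_{\Supp(\Phi\bG)}(-)^{\bG}$, citing Grothendieck's Th\'eor\`eme 5.3.1 and a supported variant of his Proposition 5.2.3 to package your Leray-type sequence, the stalk identification $(R^q\Pi\cF)_{[x]}\cong H^q(\bG_x,\cF_x)$ and the isotropy-cohomology hypothesis into the single assertion that the abutment is $H^{p+q}_{\Phi/\bG}(\cF/\bG)$, which the two softness hypotheses then kill exactly as you do. The only quibble is one of attribution: the slice neighborhoods $U\ni x$ with $Ug\cap U=\emptyset$ for $g\notin\bG_x$ (Grothendieck's condition (D)) come from Bourbaki properness of the discrete action, not from $\Phi$-properness or Palais smallness.
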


\Cref{thn:intro.cpct.supp.zd} is in turn a particular case of a more general result (\Cref{th:free.prop.zd.gen}) abstracting away from the family of compact subsets to arbitrary \emph{paracompactifying} \cite[Definition I.6.1]{bred_shf_2e_1997} families of supports. Building up the background for that discussion requires some auxiliary results on action properness, and in passing we revisit the distinction between what some of the dynamical-systems literature terms \emph{Bourbaki properness} (the notion recalled above \cite[\S III.4.1, Definition 1]{bourb_top_en_1}) and \emph{Palais properness} \cite[Definition 1.2.2]{palais_slice-nc}.

It is known that Palais properness implies the Bourbaki version but not conversely \cite[Example 2.11.13]{bs_stab_2002}. \Cref{pr:bbt.ext} builds on those examples:

\begin{propositionN}\label{prn:bourb.pal}
  Let $\bG$ be a locally compact Hausdorff group. If all Bourbaki-proper (free) $\bG$-actions on $T_{3\frac 12}$ spaces are Palais-proper then $\bG$ contains no copy of $\bZ$ as a closed subgroup.

  Equivalently, $\bG$ is a union of compact open subgroups.  \qedhere
\end{propositionN}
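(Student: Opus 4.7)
For the equivalence of the two conditions on $\bG$: if $\bG$ is a union of compact open subgroups, any closed copy of $\bZ$ would have its generator lying in a compact subgroup, making that closed discrete cyclic group a finite subset of a compact space — a contradiction. Conversely, if $\bG$ contains no closed $\bZ$, then by the classification of monothetic locally compact abelian groups every $g\in\bG$ generates a relatively compact subgroup. Connected locally compact groups with this property are compact (via Yamabe's approximation by Lie groups, reducing to the standard Lie case), so $\bG^0$ is compact; van Dantzig's theorem then produces compact open subgroups of the totally disconnected quotient $\bG/\bG^0$, and their preimages in $\bG$ are compact open subgroups that cover $\bG$.

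For the main implication I argue by contrapositive. Assume $\iota:\bZ\hookrightarrow\bG$ is a closed embedding, and choose by \cite[Example 2.11.13]{bs_stab_2002} a Tychonoff space $Y$ carrying a free Bourbaki-proper $\bZ$-action that is not Palais-proper. Form the twisted product
\[
  X := (Y\times\bG)/\bZ,
  \qquad
  (y,g)\cdot n := \bigl(yn,\ \iota(n)^{-1}g\bigr),
\]
with $\bG$ acting on $X$ by right multiplication on the second factor. This $\bG$-action is free and Bourbaki-proper, descending from the analogous action of $\bG$ on $Y\times\bG$, which commutes with the $\bZ$-action being quotiented out. The map $y\mapsto[y,1]$ realises $Y$ as a closed $\iota(\bZ)$-invariant subspace of $X$ (closedness of the image uses that $\iota(\bZ)$ is closed in $\bG$), and the restriction of the $\bG$-action to $\iota(\bZ)$ recovers the original $\bZ$-action. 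Palais-properness of the $\bG$-action on $X$ would force Palais-properness of the $\bZ$-action on $Y$, because
\[
  \{n\in\bZ:Vn\cap U\ne\emptyset\}
  \;\subseteq\;
  \{g\in\bG:Vg\cap U\ne\emptyset\}
\]
has its closure compact in $\bG$, and the intersection of a compact set with the closed $\iota(\bZ)$ is finite — contradicting the choice of $Y$.

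The principal obstacle is verifying that $X$ is Tychonoff. The quotient map $\pi:Y\times\bG\to X$ is an open covering by the discrete group $\bZ$ acting freely and Bourbaki-properly, so $X$ is Hausdorff and locally homeomorphic to the Tychonoff space $Y\times\bG$. To upgrade local complete regularity to the global property, I would separate a point from a disjoint closed set in $X$ by lifting to $Y\times\bG$, using local triviality of $\pi$ together with Bourbaki-properness to find a neighborhood of a chosen lift meeting only finitely many $\bZ$-translates of the lifted closed set, and then averaging a suitable Tychonoff function on $Y\times\bG$ over those finitely many translates to obtain a continuous $\bZ$-invariant separating function that descends to $X$.
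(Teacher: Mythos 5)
Two steps of your argument have genuine gaps.

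The first is the complete regularity of $X=(Y\times\bG)/\bZ$, which you rightly flag as the principal obstacle but then resolve by an invalid principle. Your sketch uses ``local triviality of $\pi$ together with Bourbaki-properness'' to find a neighborhood of a lift meeting only finitely many $\bZ$-translates, and then averages. But a free Bourbaki-proper action of a discrete group on a Tychonoff space need \emph{not} have Tychonoff quotient, and the counterexample is the very space $Y$ you start from: Bebutov's example is a free Bourbaki-proper $\bZ$-action on a metrizable (hence Tychonoff) subspace of $\bR^2$ whose quotient fails to be completely regular (the paper records this in \Cref{res:soft.not.desc}). Bourbaki properness gives pairwise disjointness of the translates $Un$ of a small neighborhood of a lift, but not local finiteness of the family $\{Un\}_{n\in\bZ}$ --- that is precisely Palais properness, whose failure is the whole point of the construction, so the averaged sum need not be continuous. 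What rescues the twisted product is that the diagonal $\bZ$-action on $Y\times\bG$ is \emph{Palais}-proper, because it maps $\bZ$-equivariantly onto the translation action of the closed subgroup $\bZ\le\bG$ on $\bG$, which is Palais-proper; complete regularity of the quotient then follows from \cite[Proposition 1.2.8]{palais_slice-nc}. This is exactly how \Cref{le:tw.prod.props} proceeds, and it is the step your proof cannot do without.

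The second gap is in the implication (no closed $\bZ$) $\Rightarrow$ (union of compact open subgroups). After reducing to the totally disconnected quotient $\bG/\bG_0$ you invoke van Dantzig, but van Dantzig only provides a neighborhood basis of the identity consisting of compact open subgroups; it does not place every element inside one. Any non-compact totally disconnected locally compact group (say $SL_2(\bQ_p)$) has plenty of compact open subgroups without being covered by them, and your argument never uses the no-closed-$\bZ$ hypothesis in this step. The paper closes this with Willis's scale-function theory: $\bZ\not\preceq\bG$ forces the scale to be identically $1$, hence every $g$ normalizes some compact open $\bK_g$, and $\bK_g\rtimes\braket{g}$ is a compact open subgroup containing $g$ (compact because some power of $g$ lands in $\bK_g$). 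The remaining ingredients of your proposal --- Weil's dichotomy for monothetic groups, compactness of the identity component, and the twisted-product construction with its transport of freeness and Bourbaki properness and its restriction argument for the failure of Palais properness --- do match the paper's proof.
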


\subsection*{Acknowledgements}

I am grateful for insightful and instructive comments on various portions of this work, at several stages, from A. Badzioch, S. Hurder, M. Kapovich, J.M. Lee, K.H. Neeb, A. Schmeding and F. Wagemann.


\section{Cohomology valued in compactly-supported sections}\label{se:cpct}

We assume some of the standard background on \emph{sheaves} as covered, say, in \cite{ks_shv-mfld,bred_shf_2e_1997} and countless other sources. Some notation:
\begin{itemize}[wide]
\item $\cS h_X$, $\cA b_X$ and $\tensor*[_{\cA}]{\cM od}{}$ denote, respectively, the categories of plain sheaves, abelian sheaves and $\cA$-modules on $X$ for a sheaf of rings $\cA$.
  
\item More generally, if $\bG$ is a group acting on $X$, $\cS h_X^{\bG}$ ($\cA b_X^{\bG}$) is the category of (abelian) \emph{$\bG$-sheaves} \cite[\S 5.1]{zbMATH03192990} (or \emph{$\bG$-equivariant sheaves} \cite[\S 0.2]{bl_eq-shv}) on $X$. 

\item Similarly, if $\cA\in \cA b_X$ is a $\bG$-sheaf of rings then $\tensor*[_{\cA}]{\cM od}{^{\bG}}$ is the category of equivariant modules over it.
\end{itemize}

For equivariant $\cF\in \tensor*[_{\cA}]{\cM od}{^{\bG}}$ the group $\bG$ will operate on the section space $\Gamma(\cF)$. More generally, suppose $\Phi\subseteq 2^X$ is a \emph{family of supports} \cite[Definition I.6.1]{bred_shf_2e_1997} on $X$: a family of closed subsets, closed under finite unions and containing, along with a set, all of its closed subsets. If $\Phi$ is $\bG$-invariant in the guessable sense that
\begin{equation*}
  A\in \Phi
  \xRightarrow{\quad}
  Ag\in \Phi
  ,\quad g\in \bG
\end{equation*}
then $\bG$ will operate on the space $\Gamma_{\Phi}(\cF)$ of $\Phi$-supported sections. This applies in particular to
\begin{equation}\label{eq:cl.c}
  \Phi:=\mathrm{cl}:=\left\{\text{closed subsets}\right\}
  \quad\text{or}\quad
  \Phi:=\mc:=\left\{\text{compact subsets}\right\}.
\end{equation}
When $X$ is locally compact Hausdorff the latter is \emph{paracompactifying} in the sense of the same \cite[Definition I.6.1]{bred_shf_2e_1997} (also \cite[pre Proposition 3.3.2]{zbMATH03192990}): every member of $\Phi$ is \emph{paracompact} \cite[\S 5.1]{eng_top_1989} and has a closed neighborhood again belonging to $\Phi$. 

\begin{notation}\label{not:trngl}
  We denote actions both by juxtaposition and, occasionally and for emphasis, by the symbols `$\triangleright$' and `$\triangleleft$' for left and right actions respectively. 
\end{notation}

We take it for granted that quotients $X/\bG$ by proper actions $X\times \bG\xrightarrow{\alpha}X$ are always Hausdorff \cite[\S III.4.2, Proposition 3]{bourb_top_en_1} and locally compact Hausdorff when the original space is so \cite[\S III.4.5, Proposition 11]{bourb_top_en_1}. Locally compact groups $\bG$ are assumed Hausdorff unless specified otherwise. 

\begin{remarks}\label{res:prop.act}
  \begin{enumerate}[(1),wide]
  \item\label{item:res:prop.act:perp.not} For locally compact $\bG$ and Hausdorff $X$ \cite[\S III.4.4, Proposition 7]{bourb_top_en_1} equates the properness of the action $X\times \bG\xrightarrow{\alpha}X$ with the condition
    \begin{equation}\label{eq:prop.act}
      \left(\forall x,x'\in X\right)
      \left(\exists\text{ nbhds }V_x\ni x,\ V_{x'}\ni x'\right)
      \quad:\quad
      \braket{V_{x'}:V_x}_{\alpha}
      \subseteq \bG
      \text{ is relatively compact},
    \end{equation}
    where
    \begin{equation}\label{eq:a.b}
      \braket{A:B}_{\alpha}
      :=
      \left\{g\in \bG\ :\ Bg\cap A\ne\emptyset\right\}
      \quad
      \left(\text{the $((B,A))$ of \cite[\S 0]{palais_slice-nc}}\right).
    \end{equation}
    Following \cite[Definition 1.1.1]{palais_slice-nc}, we term the relative compactness of $\braket{A:B}_{\alpha}$ \emph{mutual (or relative) thinness} for $A$ and $B$, written $A\perp B$ (or $A\perp^{\alpha}B$, when wishing to highlight the action).

  \item\label{item:res:prop.act:bbt} For locally compact Hausdorff groups acting on \emph{Tychonoff} (or \emph{completely regular}, or $T_{3 \frac 12}$ \cite[Definition 14.8]{wil_top}) spaces, Palais gives a formally stronger notion of action properness in \cite[Definition 1.2.2]{palais_slice-nc} (note the alteration in quantifier structure, as compared to \Cref{eq:prop.act}):
    \begin{equation}\label{eq:pal.prop}
      \left(\forall x\in X\right)
      \left(\exists\text{ nbhd }V_x\ni x\right)
      \left(\forall x'\in X\right)
      \left(\exists\text{ nbhd }V_{x'}\ni x'\right)
      \quad:\quad
      V_x\perp  V_{x'}.
    \end{equation}
    The two notions coincide \cite[Theorem 1.2.9]{palais_slice-nc} when the carrier space $X$ is locally compact, but not in general (\cite[p.79, Example preceding \S 2]{MR271925},  \cite[Example 2.11.13 and Figure 2.11.14]{bs_stab_2002} or \cite[\S 2.4, Example 3]{MR61821}, following Bebutov). Consequently, following common practice in resolving the ambiguity (e.g. \cite[\S 1]{zbMATH01992380}, \cite[\S 10.5]{zbMATH05904318}), we refer to \Cref{eq:prop.act} and \Cref{eq:pal.prop} as \emph{Bourbaki} and \emph{Palais properness} respectively.

    We occasionally revert back to plain \emph{proper} when the distinction is irrelevant (e.g. when $X$ is locally compact, as mentioned).     
    
  \item\label{item:res:prop.act:princ.g.bun} Given freeness, Bourbaki properness for an action of a discrete group on a Hausdorff space also ensures that the action is \emph{properly discontinuous} (\cite[Definition preceding Theorem 81.5]{mnk}, \cite[post Lemma V.8.1]{massey_basic-alg-top_1991}, \cite[post Remark 3.1.6]{td_alg-top}):
    \begin{equation}\label{eq:prop.disc}
      \left(\forall x\in X\right)
      \left(\exists\text{ nbhd }U\ni x\right)
      \left(\forall 1\ne g\in \bG\right)
      \quad:\quad
      Ug\cap U=\emptyset. 
    \end{equation}
    \Cref{eq:prop.disc} is equivalent to $X\xrightarrowdbl{}X/\bG$ being a \emph{locally trivial} \cite[Definition 2.6.2]{hus_fib} \emph{principal $\bG$-bundle} \cite[Definition 4.2.2]{hus_fib} (cf. also \cite[\S 14.1]{td_alg-top}, where principal bundles are by definition locally trivial).

    As to the implication claim, it is essentially \cite[Theorem 1.2.9, (1) $\Rightarrow$ (2)]{palais_slice-nc} (which does not make use of that statement's local compactness assumption on the space $X$). 
    
  \item Conversely, a free action of a discrete group on a Hausdorff space will be Bourbaki proper if it is properly discontinuous and the quotient $X/\bG$ (equipped with the \emph{quotient topology} \cite[\S I.2.4, Example (1)]{bourb_top_en_1}) is Hausdorff: if $X/\bG$ is Hausdorff for a properly discontinuous action, one can take for the $V_{x,x'}$ of \Cref{eq:prop.act} preimages of disjoint neighborhoods of
    \begin{equation*}
      \pi(x)\text{ and }\pi(x')
      ,\quad
      X\xrightarrowdbl[\text{the obvious quotient map}]{\quad\pi\quad}X/\bG
    \end{equation*}
    The separation of $X/\bG$ is not automatic, given proper discontinuity: \cite[paragraph post Proposition 1.1.4]{palais_slice-nc} gives an example of an $\bR$-action with non-Hausdorff quotient which restricted to $\bZ\le \bR$ will be properly discontinuous with non-Hausdorff quotient. The back-and-forth passage between $\bR$ and its \emph{cocompact} subgroup $\bZ$ is examined more broadly in \Cref{le:coco} below.
  \end{enumerate}
\end{remarks}


\Cref{th:free.prop.zd.cpct} concerns proper $\bZ^d$-actions and the resulting cohomology on compactly-supported section spaces (that $H^p=0$ for $p>d$ follows from the fact \cite[Corollary 10.58]{rot} that $\bZ^d$ has \emph{cohomological dimension} $d$). For $\cF\in \cA b_X^{\bG}$ and $X\xrightarrow{\pi}X/\bG$ It will be convenient to use the notation
\begin{equation*}
  \cF/\bG
  :=
  (\pi_*\cF)^{\bG}
  \quad
  \left(\text{the \emph{sheaf of invariants} \cite[post Proposition 5.1.2]{zbMATH03192990}}\right):
\end{equation*}
its sections over $U\subseteq X/\bG$ are the $\bG$-invariant sections over $\pi^{-1}(U)\subseteq X$. Recall also \cite[Definition II.9.1]{bred_shf_2e_1997} that \emph{$\Phi$-soft} sheaves are those whose sections over $\Phi$-members extend globally (for paracompactifying $\Phi$, the case of exclusive interest below, this is equivalent by \cite[Proposition II.9.3]{bred_shf_2e_1997} to the alternative definition for \emph{$\Phi$-mou} of \cite[\S 3.5]{god_faisc_1958}). We write
\begin{equation}\label{eq:supp.fam}
  \Supp(\Theta)
  :=
  \left\{\text{family of supports generated by the closed-set family $\Theta$}\right\}.
\end{equation}




\begin{theorem}\label{th:free.prop.zd.cpct}
  For a proper action $X\times \bZ^d\to X$ on a locally compact Hausdorff space and a $\Supp\left(\mc\bZ^d\right)$-soft sheaf $\cF\in \cA b_X^{\bZ^d}$ for
  \begin{equation*}
    \mc\bZ^d
    :=
    \left\{K\bZ^d\ :\ K\in \mc:=\text{compact subsets of }X\right\}
  \end{equation*}
  we have
  \begin{equation*}
    H^p\left(\bZ^d,\ \Gamma_{\mc}(\cF)\right)
    \cong
    \begin{cases}
      \Gamma_{\mc}\left(X/\bZ^d,\ \cF/\bZ^d\right)
      &\text{if }p=d\\
      0
      &\text{otherwise}.
    \end{cases}    
  \end{equation*}
\end{theorem}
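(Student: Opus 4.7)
The strategy is induction on $d$, with the base case $d = 1$ handled directly and the inductive step via the Lyndon--Hochschild--Serre (LHS) spectral sequence. As a preliminary observation, a proper $\bZ^d$-action on a Hausdorff space is automatically free: isotropy groups are compact, hence finite, hence trivial subgroups of the torsion-free $\bZ^d$. Consequently $X\xrightarrowdbl{\pi}X/\bZ^d$ is a principal $\bZ^d$-bundle, locally trivial by proper discontinuity.

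For the base case $d=1$, the length-two resolution $0\to\bZ[\bZ]\xrightarrow{1-t}\bZ[\bZ]\to\bZ\to 0$ concentrates $H^\bullet(\bZ,-)$ in degrees $0$ (invariants) and $1$ (coinvariants). The invariants $\Gamma_{\mc}(X,\cF)^{\bZ}$ vanish because a $\bZ$-invariant compactly-supported section has $\bZ$-invariant compact support, but free proper $\bZ$-orbits are infinite closed discrete subsets. For the coinvariants I introduce the averaging map $\mathrm{Av}\colon\Gamma_{\mc}(X,\cF)\to\Gamma_{\mc}(X/\bZ,\cF/\bZ)$, $s\mapsto\sum_{n\in\bZ}t^n s$: properness makes the sum locally finite, the result is $\bZ$-invariant with support in $(\supp s)\bZ$, and $(1-t)$-boundaries telescope away. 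Injectivity modulo boundaries: given $\mathrm{Av}(s)=0$, set $s':=\sum_{n\ge 0}t^n s$; the identity $s'=-\sum_{n<0}t^n s$ (coming from $\mathrm{Av}(s)=0$) forces $\supp s'\subseteq\bigcup_{n\ge 0}t^n K\cap\bigcup_{n<0}t^n K$ for $K:=\supp s$, a finite union of pairwise-compact intersections by properness, hence compact; then $s'\in\Gamma_{\mc}$ and $(1-t)s'=s$. Surjectivity: lift $\sigma$ with compact support $L\subset X/\bZ$ to the $\bZ$-invariant $\tilde\sigma$ on $\pi^{-1}(L)$; cover $L$ by finitely many evenly-covered opens, lift $\tilde\sigma$ piecewise to chosen sheets, and use $\mc$-softness (implied by the stronger $\Supp(\mc\bZ)$-softness since $\mc\subseteq\Supp(\mc\bZ)$) to assemble a single $s\in\Gamma_{\mc}(X,\cF)$ whose support meets each orbit fibre over $L$ exactly once, so that $\mathrm{Av}(s)=\tilde\sigma$.

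For $d\ge 2$, the LHS spectral sequence attached to $0\to\bZ^{d-1}\to\bZ^d\to\bZ\to 0$ reads
\[
  E_2^{p,q}=H^p\bigl(\bZ,\ H^q(\bZ^{d-1},\ \Gamma_{\mc}(X,\cF))\bigr)\Longrightarrow H^{p+q}(\bZ^d,\ \Gamma_{\mc}(X,\cF)).
\]
Since $\Supp(\mc\bZ^{d-1})\subseteq\Supp(\mc\bZ^d)$ the softness hypothesis passes to the subaction, and the inductive hypothesis collapses the $q$-column to $q=d-1$ with value $\Gamma_{\mc}(X/\bZ^{d-1},\cF/\bZ^{d-1})$ carrying the residual $\bZ=\bZ^d/\bZ^{d-1}$-action induced from the $\bZ^d$-action on $X$. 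That residual action is proper on the LCH quotient, and local triviality of $X\to X/\bZ^{d-1}$ forces $\cF/\bZ^{d-1}$ to be locally isomorphic to $\cF$ restricted to a sheet, so $\Supp(\mc\bZ)$-softness descends. The base case then collapses the $p$-direction to $p=1$ with value $\Gamma_{\mc}(X/\bZ^d,\cF/\bZ^d)$; only $E_2^{1,d-1}$ survives, computing $H^d$ and forcing all other $H^p$ to vanish.

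The main obstacle is the surjectivity of the averaging map in the base case: a naive softness-extension of $\tilde\sigma|_{K_0}$ to a compactly-supported section typically has support meeting nontrivial $\bZ$-translates of $K_0$ and contaminates $\mathrm{Av}(s)$. Controlling this requires combining the finite cover of $L$ by evenly-covered opens (to reduce to trivialised pieces of the $\bZ$-bundle) with the paracompactifying nature of $\mc$-softness, used to trap each local extension inside a preselected open neighbourhood of its chosen sheet and disjoint from the sheet's nontrivial $\bZ$-translates. An analogous softness-descent verification for $\cF/\bZ^{d-1}$ on $X/\bZ^{d-1}$ is the principal technical input needed in the inductive step.
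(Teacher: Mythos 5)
Your overall architecture --- induction on $d$ with the Lyndon--Hochschild--Serre collapse, reduction of the base case to $H^1\cong\Gamma_{\mc}/\im(1\triangleright-\id)$, the averaging map $\mathrm{Av}(s)=\sum_n n\triangleright s$, and the ``half-sum has compact support'' argument for injectivity --- coincides with the paper's proof, and those parts are correct (the observation that properness already forces freeness of a $\bZ^d$-action is a nice touch). The genuine gap is in the surjectivity of $\mathrm{Av}$, which is the heart of the theorem. The literal goal you set --- an $s$ ``whose support meets each orbit fibre over $L$ exactly once'' --- is unattainable: already for $X=\bR$, $\cF$ the sheaf of continuous germs and $\sigma\equiv1$ on $S^1$, such an $s$ would be supported on a compact set mapped bijectively, hence homeomorphically, onto $S^1$ by $\pi$, which cannot embed in $\bR$. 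What your scheme actually requires is a decomposition $\sigma=\sum_i\sigma_i$ with $\supp\sigma_i$ compact inside the evenly-covered $U_i$ (each $\sigma_i$ then lifts to a single sheet and averages back to $\sigma_i$); but that is a partition-of-sections statement for the \emph{descended} sheaf $\cF/\bZ$ on $X/\bZ$, i.e.\ its $\mc$-softness, which is not among the hypotheses and does not obviously follow from $\Supp(\mc\bZ)$-softness of $\cF$ upstairs (see the cautionary \Cref{res:soft.not.desc}). The alternative of blending the chosen sheets upstairs needs an \emph{equivariant} partition of unity, which fails for general abelian sheaves (cf.\ \Cref{ex:pos.char}); plain $\mc$-softness only extends sections from closed sets --- it gives you no way to cut them off or interpolate between incompatible local lifts.

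The paper's surjectivity argument sidesteps all of this and uses exactly the stated hypothesis: regard $\sigma$ as a $\bZ$-invariant section $s'$ supported on $K\bZ$, pick a compact neighbourhood $K'\supseteq K$ and $M$ with $K'\bZ_{\le-M}\cap K'\bZ_{\ge M}=\emptyset$, use $\Supp(\mc\bZ)$-softness \emph{on $X$} to extend the section equal to $s'$ on $K'\bZ_{\ge M}$ and to $0$ on $K'\bZ_{\le-M}$ and on the boundary of $K'\bZ$ to a global $\widetilde s$ supported in $K'\bZ$, and then take $s:=1\triangleright\widetilde s-\widetilde s$: this is compactly supported and telescopes under averaging to $s'$. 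No gluing, no partition of unity --- a single extension from one member of $\Supp(\mc\bZ)$. A secondary issue of the same flavour occurs in your inductive step, where you assert that $\Supp(\mc\bZ)$-softness of $\cF/\bZ^{d-1}$ ``descends'' by local triviality: softness with respect to the non-compact supports $K\bZ$ is a global extension property, not something read off from local isomorphisms with sheets, so this too needs an argument rather than an appeal to even coverings.
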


\begin{remark}\label{re:czd.parac}
  The statement's support family $\Supp\left(\mc\bZ^d\right)$ is paracompactifying: the properness assumption ensures \cite[Proposition 1.4(b)]{zbMATH03603988} that the restricted action map
  \begin{equation*}
    K\times \bZ^d\xrightarrow{\quad}X
    ,\quad
    K\subseteq X\text{ compact}
  \end{equation*}
  is proper so in particular closed, its domain $K\times \bZ^d$ is of course paracompact, so its image must be so too by the stability of paracompactness under closed maps \cite[Theorem 5.1.33]{eng_top_1989}. See also \Cref{cor:sweep.paraco} for more in the same vein. 
\end{remark}

\begin{proof}
    The proof inducts on $d\in \bZ_{\ge 0}$, with the base case $d=0$ tautological.

  \begin{enumerate}[(I),wide]
  \item\textbf{: the induction step.} This will be an application of the \emph{Lyndon-Hochschild-Serre spectral sequence} \cite[\S 6.8.2]{weib_halg}
    \begin{equation*}
      E_2^{p,q}
      :=
      H^p\left(
        \bZ
        ,\
        H^q\left(
          \bZ^{d-1},\ \bullet
        \right)
      \right)
      \xRightarrow{\quad}
      H^{p+q}
      \left(
        \bZ^d,\ \bullet
      \right).
    \end{equation*}
    The induction hypothesis collapses the spectral sequence on the second page, yielding
    \begin{equation*}
      \begin{aligned}
        H^p\left(\bZ^d,\ \Gamma_{\mc}(\cF)\right)
        &\cong
          E_2^{1,p-1}
          =
          H^1\left(
          \bZ
          ,\
          H^{p-1}
          \left(
          \bZ^{d-1},\ \Gamma_{\mc}(\cF)
          \right)
          \right)\\
        &\cong
          \begin{cases}
            H^1\left(\bZ,\ \Gamma_{\mc}\left(X/\bZ^{d-1},\ \cF/\bZ^{d-1}\right)\right)
            &p=d\\
            0
            &\text{otherwise}
          \end{cases}
          \quad\left(\text{induction hypothesis for }d-1\right)\\
        &\cong
          \begin{cases}
            \Gamma_{\mc}\left(X/\bZ^{d},\ \cF/\bZ^d\right)
            &p=d\\
            0
            &\text{otherwise}
          \end{cases}
          \quad\left(\text{induction hypothesis for }d=1\right).\\
      \end{aligned}
    \end{equation*}

    
  \item\textbf{: general remarks on $d=1$.} Only the case $p=1$ is interesting: higher cohomology certainly vanishes because $\bZ$ has (as recalled above \cite[Corollary 10.58]{rot}) cohomological dimension $cd(\bZ)=1$, and, the action being proper, there no compactly-supported $\bZ$-invariant sections (i.e. $H^0$ vanishes as well).
    
    Writing $n\triangleright$ for the action of $n\in \bZ$ on a left $\bZ$-module $A$ (\Cref{not:trngl}), observe first that any space $Z^1(\bZ,A)$ of \emph{1-cocycles} \cite[Application 6.5.5]{weib_halg} is in natural bijection with $A$:
    \begin{equation}\label{eq:1cocyca}
      \bZ\ni n
      \xmapsto{\quad}
      \begin{cases}
        \displaystyle
        a_n:=\sum_{k=0}^{n-1} \left(k\triangleright a \right)
        &\text{if }n\ge 0\\
        a_n
        :=
        -\left(n\triangleright a_{-n}\right)
        =
        \displaystyle
        -\sum_{k=n}^{-1}\left(k\triangleright a\right)
        &\text{if }n< 0\\
      \end{cases}
    \end{equation}
    is the unique 1-cocycle $(a_n)_{n\in \bZ}$ with $a_1=a$.
    
    In the present setting we can thus identify $Z^1\left(\bZ,\Gamma_{\mc}\right)\cong \Gamma_{\mc}$, and the cohomology group is
    \begin{equation}\label{eq:ck.quot}
      H^1\left(\bZ,\Gamma_{\mc}\right)
      \cong
      \Gamma_{\mc}/\im\left(1\triangleright -\id\right)|_{\Gamma_{\mc}}.
    \end{equation}
    The desired identification with the space of $\bZ$-periodic $c$-supported sections will be (the factorization through the quotient \Cref{eq:ck.quot} of) the map
    \begin{equation}\label{eq:periodization}
      \Gamma_{\mc}(\cF)
      \ni
      s
      \xmapsto{\quad}      
      \begin{aligned}
        S
        &:=
          \left(
          X\ni x
          \xmapsto{\quad}
          \lim_{t\to\infty}
          \sum_{k\ge 1}
          (t-k)\triangleright s
          \right)
        \\
        &=
          \lim_{t\to\infty}
          \sum_{k\ge 1}
          s\left(\bullet\triangleleft(t-k)\right)\triangleleft(k-t)\\        
        &\in
          \Gamma_{\mc}\left(X/\bZ,\ \cF/\bZ\right):
      \end{aligned}
    \end{equation}
    \begin{itemize}[wide]
    \item the sum is pointwise finite by the compact-support assumption on $s$;

    \item and
      \begin{equation}\label{eq:ftilde}
        \widetilde{s}
        :=
        \sum_{k\ge 1}(-k)\triangleright s
        =
        \sum_{k\ge 1}s\left(\bullet\triangleleft(-k)\right)\triangleleft k
      \end{equation}
      has the property that $s=1\triangleright \widetilde{s}-\widetilde{s}$, so that $t\triangleright \widetilde{s}$ stabilizes for large $t\in \bZ_{\ge 0}$ and the limit $S$ is $\bZ$-stable.
    \end{itemize}

  \item\textbf{$d=1$: \Cref{eq:periodization} induces an injective map on \Cref{eq:ck.quot}.} That it does indeed induce such a map to begin with follows from the observation that whenever
    \begin{equation*}
      s=1\triangleright s'-s'
      ,\quad
      s'\in \Gamma_{\mc}(\cF)
    \end{equation*}
    we have
    \begin{equation*}
      \sum_{k\ge 1}
      (t-k)\triangleright s
      =
      \sum_{k\ge 1}
      (t+1-k)\triangleright s'
      -
      \sum_{k\ge 1}
      (t-k)\triangleright s',
    \end{equation*}
    vanishing at $t\to\infty$. 

    As for injectivity, simply observe that
    \begin{equation*}
      S=0
      \xRightarrow{\quad}
      \bigg(
      \text{\Cref{eq:ftilde} has compact support}
      \bigg)
      \xRightarrow{\quad}
      \bigg(
      s=1\triangleright \widetilde{s}-\widetilde{s}
      =0\text{ in \Cref{eq:ck.quot}}
      \bigg).
    \end{equation*}
    
  \item\textbf{$d=1$: \Cref{eq:periodization} is onto.} Regard an arbitrary compactly-supported
    \begin{equation}\label{eq:sect.downstairs}
      s'\in \Gamma_{\mc}\left(X/\bZ,\ \cF/\bZ\right)
    \end{equation}
    as a $\bZ$-invariant section of $\cF$ supported on $K\bZ$ for compact $K\subseteq X$. Consider next
    \begin{itemize}[wide]
    \item a compact neighborhood $K'\supseteq K$;

    \item and $M\in \bZ_{>0}$ sufficiently large to ensure that the (automatically closed, by action properness) subsets $K'\bZ_{\le -M}$ and $K'\bZ_{\ge M}$ of $X$ are disjoint.
    \end{itemize}
    Define
    \begin{equation*}
      \Gamma\left(\cF|_{K'\bZ_{\le -M} \cup K'\bZ_{\ge M}\cup \partial K'\bZ}\right)
      \ni
      \widetilde{s}
      :=
      \begin{cases}
        s'&\text{on $K'\bZ_{\ge M}$}\\
        0&\text{on $K'\bZ_{\le -M}$}\\
        0&\text{on $\partial K'\bZ$}
      \end{cases}
    \end{equation*}
    The softness assumption allows us to extend $\widetilde s$ globally, and we may as well assume it supported in $K'\bZ$ because by construction it vanishes on that set's boundary. Finally, the image through \Cref{eq:periodization} of $s:=1\triangleright \widetilde{s}-\widetilde{s}$ will be the pre-selected \Cref{eq:sect.downstairs}.
  \end{enumerate}
\end{proof}

In particular, the sheaf of (germs of) continuous functions on $X$ of course acquires a canonical equivariant structure for any action. So do the sheaves of sections of $C^k$, $0\le k\le \infty$ (i.e. $k$-fold continuously differentiable) \emph{natural bundles} \cite[\S 6.14, Examples]{kms_natop} on smooth manifolds: $C^k$ functions, differential forms, vector fields, etc. The following remark encompasses these examples.

\begin{corollary}\label{cor:mfld}
  Let $M\times \bZ^d\to M$ be a free and proper action on a smooth manifold, $\cE$ a smooth $\bZ^d$-equivariant vector bundle, and denote by $C^k_{\mc}(\bullet)$ the spaces of compactly-supported $C^k$ sections for $0\le k\le \infty$.

  We then have
  \begin{equation*}
    H^p\left(\bZ^d,\ C_{\mc}^k(\cE)\right)
    \cong
    \begin{cases}
      C_{\mc}^k(\cE/\bZ^d)
      &\text{if }p=d\\
      0
      &\text{otherwise}.
    \end{cases}
  \end{equation*}  
\end{corollary}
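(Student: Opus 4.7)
The plan is to apply \Cref{th:free.prop.zd.cpct} to the sheaf $\cF$ of germs of $C^k$ sections of $\cE$, equipped with the $\bZ^d$-equivariant structure inherited from that of $\cE$. The ambient space $M$ is locally compact Hausdorff as a smooth manifold, and properness is given by hypothesis; the remaining verifications are that $\cF$ is $\Supp(\mc\bZ^d)$-soft and that the invariant push-forward $\cF/\bZ^d$ on $M/\bZ^d$ identifies naturally with the sheaf of $C^k$-germs of the descended bundle $\cE/\bZ^d$ in a way compatible with compactly-supported global sections.

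For softness, note that $\cF$ is a module over the sheaf $C^k_M$ of $C^k$-function germs, and for each $0 \le k \le \infty$ the sheaf $C^k_M$ admits partitions of unity subordinate to any open cover of a paracompact subspace of $M$ --- standard for $C^\infty$ and $C^k$ on smooth manifolds, with the $k=0$ case handled by the usual Urysohn-type construction. Since $\Supp(\mc\bZ^d)$ is paracompactifying by \Cref{re:czd.parac}, this makes $\cF$ fine with respect to that family, hence $\Supp(\mc\bZ^d)$-soft by the standard implication fine $\Rightarrow$ soft for paracompactifying support families \cite[\S II.9]{bred_shf_2e_1997}.

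For the descent identification, freeness together with (Bourbaki) properness of the action realizes $M \xrightarrowdbl{\pi} M/\bZ^d$ as a locally trivial principal $\bZ^d$-bundle by \Cref{res:prop.act}\Cref{item:res:prop.act:princ.g.bun}, i.e.\ a smooth covering of smooth manifolds over which $\cE$ descends to a smooth vector bundle $\cE/\bZ^d$. For open $U \subseteq M/\bZ^d$, the sections of $(\pi_*\cF)^{\bZ^d}$ over $U$ are by definition the $\bZ^d$-invariant $C^k$ sections of $\cE$ over $\pi^{-1}(U)$, and a local $\bZ^d$-equivariant trivialization $\pi^{-1}(U) \cong U \times \bZ^d$ puts these in natural bijection with $C^k$ sections of $\cE/\bZ^d$ over $U$. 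Sheafifying yields the claimed identification of $\cF/\bZ^d$ with the sheaf of $C^k$-germs of $\cE/\bZ^d$, and taking compactly-supported global sections gives $\Gamma_{\mc}(M/\bZ^d,\ \cF/\bZ^d) = C^k_{\mc}(\cE/\bZ^d)$. An invocation of \Cref{th:free.prop.zd.cpct} then closes out the proof.

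I do not anticipate a genuine obstacle: the corollary is essentially an unpacking of \Cref{th:free.prop.zd.cpct} for the natural sheaf of sections of a natural bundle. The only mildly delicate point is the softness verification on a possibly non-paracompact ambient manifold $M$, which is circumvented by working with the paracompactifying family $\Supp(\mc\bZ^d)$ rather than demanding softness with respect to the full family of closed subsets.
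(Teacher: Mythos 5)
Your proposal is correct and follows essentially the same route as the paper: reduce to \Cref{th:free.prop.zd.cpct}, obtain softness of the sheaf of $C^k$-germs from its module structure over a soft sheaf of function germs relative to the paracompactifying family $\Supp(\mc\bZ^d)$ (the paper cites \cite[Example II.9.4, Theorem II.9.16]{bred_shf_2e_1997} for exactly the partition-of-unity mechanism you describe), and use freeness plus properness to identify $M/\bZ^d$ as a smooth manifold carrying the descended bundle $\cE/\bZ^d$. Your spelled-out descent identification is only left implicit in the paper's two-line proof, but the content is the same.
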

\begin{proof}
  The action being also free (as well as proper), $M/\bZ^d$ will again be a smooth manifold \cite[Theorem 21.13]{lee2013introduction} and $\cE/\bZ^d$ a smooth vector bundle thereon. 
  
  Sheaves of (\emph{germs} \cite[p.2]{bred_shf_2e_1997} of) $C^k$ sections are modules over the sheaf of smooth germs. The latter is $\Phi$-soft \cite[Example II.9.4]{bred_shf_2e_1997} for any paracompactifying support family $\Phi$ (such as $\Phi:=\Supp\left(\mc\bZ^d\right)$: \Cref{re:czd.parac}), along with its modules \cite[Theorem II.9.16, Example II.9.17]{bred_shf_2e_1997}.
\end{proof}

In particular, when $\cE$ is the trivial (real) line bundle one recovers cohomology valued in spaces of compactly-supported $C^k$ functions. We may as well drop the freeness requirement: by \cite[Example 11]{zbMATH05718348} or \cite[Proposition 13.2.1]{thurst_3mfld} the quotient $M/\bZ^d$ will in that case be an \emph{orbifold} in the sense of \cite[Definition 2.2]{MR2883692}, \cite[Definition 2.1]{zbMATH05546218}, \cite[\S 13.2]{thurst_3mfld}, \cite[\S 1]{MR1466622} etc.

There are several notions of $C^k$ morphism between smooth orbifolds in the literature, but when the codomain is $\bR$ the four flavors in \cite[\S 2]{zbMATH06223376}, for instance, all converge. $C^k$ \emph{functions} on a smooth orbifold are thus unambiguous, and \Cref{th:free.prop.zd.cpct} specializes to

\begin{corollary}\label{cor:orbif}
  For a free and proper action  $M\times \bZ^d\to M$ on a smooth manifold and $0\le k\le \infty$ we have 
  \begin{equation*}
    H^p\left(\bZ^d,\ C_{\mc}^k(M)\right)
    \cong
    \begin{cases}
      C_{\mc}^k(M/\bZ^d)
      &\text{if }p=d\\
      0
      &\text{otherwise}.
    \end{cases}
  \end{equation*}
  \qedhere
\end{corollary}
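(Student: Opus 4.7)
The plan is to obtain this as an immediate specialization of \Cref{th:free.prop.zd.cpct} (equivalently, of \Cref{cor:mfld} with $\cE$ the trivial real line bundle), taking $\cF$ to be the sheaf of germs of $C^k$ functions on $M$ equipped with its canonical $\bZ^d$-equivariant structure by pullback along the action diffeomorphisms.

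First I would verify the softness hypothesis of \Cref{th:free.prop.zd.cpct}. The sheaf of $C^k$-function germs is a module over the sheaf of $C^{\infty}$-function germs, which is $\Phi$-soft for every paracompactifying family of supports $\Phi$ by \cite[Example II.9.4]{bred_shf_2e_1997}, and modules over such soft sheaves of rings are themselves $\Phi$-soft by \cite[Theorem II.9.16 and Example II.9.17]{bred_shf_2e_1997}. Since \Cref{re:czd.parac} guarantees that $\Supp(\mc\bZ^d)$ is paracompactifying under the properness assumption, the $\Supp(\mc\bZ^d)$-softness needed by \Cref{th:free.prop.zd.cpct} is in place.

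Next I would identify the descended sheaf $\cF/\bZ^d$ with the sheaf of $C^k$-germs on $M/\bZ^d$, and its space of compactly-supported sections with $C_{\mc}^k(M/\bZ^d)$. Freeness plus properness ensures via \cite[Theorem 21.13]{lee2013introduction} that $M/\bZ^d$ is a smooth manifold and $\pi\colon M \rightarrowdbl M/\bZ^d$ is a smooth covering map; thus for every open $U\subseteq M/\bZ^d$ the $\bZ^d$-invariant $C^k$ functions on the saturated open $\pi^{-1}(U)$ are exactly the $C^k$ functions on $U$, so $\cF/\bZ^d$ is indeed the $C^k$-germ sheaf downstairs. Matching the support families is similarly direct: for any compact $K_0\subseteq M/\bZ^d$ the preimage $\pi^{-1}(K_0)$ is a closed $\bZ^d$-saturated subset of the form $K\bZ^d$ with $K\subseteq M$ compact (e.g.\ intersect $\pi^{-1}(K_0)$ with a compact neighborhood covering $K_0$), and conversely $\pi(K\bZ^d)=\pi(K)$ is compact for every compact $K$.

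There is no real obstacle here beyond these routine bookkeeping checks; the corollary is a direct specialization. Should one wish to drop the freeness assumption (as the paragraph preceding the statement anticipates), the only substantive replacement is to invoke one of the mutually equivalent $C^k$-function formalisms on orbifolds recalled just before the statement, so that the invariant upstairs sections on $\pi^{-1}(U)$ can still be identified with the $C^k$ functions on $U\subseteq M/\bZ^d$; the softness, descent and support-matching steps then proceed unchanged.
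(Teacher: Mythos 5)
Your proposal is correct and follows exactly the route the paper intends: \Cref{cor:orbif} is just \Cref{cor:mfld} with $\cE$ the trivial real line bundle, with the softness of the $C^k$-germ sheaf (as a module over the $\Supp(\mc\bZ^d)$-soft sheaf of smooth germs) and the identification of $\cF/\bZ^d$ and $\pi_*\mc$ with the $C^k$-germ sheaf and compact supports on the quotient manifold supplying the only checks needed. The paper leaves these identifications implicit (the corollary carries no separate proof), so your explicit verification of the descent of the sheaf and of the support family is welcome but not a departure from the paper's argument.
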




\begin{remark}\label{re:zd.vs.cpct.lie}
  Because in the context of \Cref{th:free.prop.zd.cpct}, assuming the action free, $X\xrightarrowdbl{\pi}X/(\bG:=\bZ^d)$ is a principal $\bG$-bundle and the mutually-inverse functors
  \begin{equation*}
    \begin{tikzpicture}[>=stealth,auto,baseline=(current  bounding  box.center)]
      \path[anchor=base] 
      (0,0) node (l) {$\cA b_X^{\bG}$}
      +(6,0) node (r) {$\cA b_{X/\bG}$}
      ;
      \draw[->] (l) to[bend left=10] node[pos=.5,auto] {$\scriptstyle \cF\xmapsto{\quad}\cF/\bG $} (r);
      \draw[->] (r) to[bend left=10] node[pos=.5,auto] {$\scriptstyle \pi^*\cG \reflectbox{\ensuremath{\xmapsto{\quad}}} \cG$} (l);
    \end{tikzpicture}
  \end{equation*}
  constitute an equivalence of categories (\cite[\S 0.3, Lemma]{bl_eq-shv} or \cite[\S 5.1, post (5.1.2)]{zbMATH03192990}, say). \Cref{th:free.prop.zd.cpct} could have thus been phrased with
  \begin{equation*}
    \left(\cG\in \cA b_{X/\bG}\text{ and }\pi^*\cG\right)
    \quad\text{in place of}\quad
    \left(\cF/\bG\in \cA b_{X/\bG}\text{ and }\cF\right).
  \end{equation*}
  In that form, the result identifies the cohomology $H^p(\bT^d,\ \Gamma_{\mc}(\pi^*\cG))$ on the \emph{classifying space} $\bT^d\cong B_{\bZ^d}$ (\cite[\S\S II.1 and II.3]{am_coh-gp}, \cite[\S 7]{hjjm_bdle}) of a sheaf attached to the $\bZ^d$-module $\Gamma_{\mc}(\pi^*\cG)$.

  Note the contrast between \Cref{th:free.prop.zd.cpct} and the analogous situation for \emph{compact} acting groups $\bG$: there is then similarly a spectral sequence \cite[Theorem IV.9.2]{bred_shf_2e_1997}
  \begin{equation*}
    E_2^{p,q}
    :=
    H^p\left(B_{\bG},\ \cH^q_{\mc}(X,\pi^*\cG)\right)
    \xRightarrow{\quad}
    H_{\mc}^{p+q}(X/\bG,\ \cG)
  \end{equation*}
  if
  \begin{itemize}
  \item $X$ is assumed locally compact Hausdorff;

  \item the action is free (and automatically proper \cite[\S II.4.1, Proposition 2(a)]{bourb_top_en_1}, the group being compact);

  \item and $\cH^q_{\mc}(X,\pi^*\cG)$ is a sheaf on $B_{\bG}$ with stalks $H^q_{\mc}(X,\pi^*\cG)$ (the compactly-supported cohomology of $\pi^*\cG$ on $X$ \cite[Definition II.2.2]{bred_shf_2e_1997}), defined in \cite[\S IV.9]{bred_shf_2e_1997} as an instance of the \emph{Leray sheaf} construction of \cite[Definition IV.4.1]{bred_shf_2e_1997}. 
  \end{itemize}
  In particular, when $\cG$ and $\pi^*\cG$ happen to be $\mc$-soft the higher cohomologies $H^q_{\mc}(X,\pi^*\cG)$ and $H^q(X/\bG,\cG)$, $q\ge 1$ vanish \cite[Theorem II.9.11]{bred_shf_2e_1997} and hence so does $H^p(B_{\bG},\ \Gamma_{\mc}(\pi^*\cG))$ for $p\ge 1$.

  This is precisely what happens in the context of \Cref{cor:mfld} (\cite[Example II.9.17]{bred_shf_2e_1997} notes this for $C^{\infty}$; the remark applies equally to all $0\le k\le \infty$). 
\end{remark}

\section{Families of supports more broadly}\label{se:fams}

To move away from compact supports, we henceforth assume (much as \cite[\S III.4.4, Proposition 7]{bourb_top_en_1} does) $\bG$ locally compact Hausdorff and $X$ at least Hausdorff.

\begin{remark}\label{re:cpct.clsd}
  In the sequel we repeatedly use the fact that for any action $X\times \bG\xrightarrow{\alpha}X$ of a Hausdorff topological group on a Hausdorff space 
  \begin{equation*}
    \left(F\subseteq X\text{ closed}\right)
    \ 
    \&
    \ 
    \left(K\subseteq \bG\text{ compact}\right)
    \xRightarrow{\quad}
    FK\text{ is closed}. 
  \end{equation*}
  This is a standard result (as sketched e.g. in \cite[\S 29, Supplementary Exercise 11]{mnk} for the usual translation action):
  \begin{itemize}[wide]
  \item consider a \emph{convergent net} \cite[Definition 11.2]{wil_top}
    \begin{equation*}
      y_{\lambda}k_{\lambda}
      \xrightarrow[\quad\lambda\quad]{\quad}
      y
      ,\quad
      y_{\lambda}\in F,\ k_{\lambda}\in K;
    \end{equation*}
  \item assume $(k_{\lambda})_{\lambda}$ convergent to $k\in K$ (possible by compactness \cite[Theorem 17.4]{wil_top}, perhaps after passing to a subnet);

  \item which then yields
    \begin{equation*}
      y_{\lambda} = \left(y_{\lambda}k_{\lambda}\right)k_{\lambda}^{-1}
      \xrightarrow[\quad\lambda\quad]{\quad}
      yk^{-1}
      \xRightarrow{\quad}
      y\in Fk\subseteq FK.
    \end{equation*}
  \end{itemize}
\end{remark}

The following notion transports \Cref{eq:prop.act} over to point-closed set (rather than point-point) pairs.


\begin{definition}\label{def:transv.2.supp}
  Let $X\times \bG\xrightarrow{\alpha}X$ be an action of a locally compact group on a Hausdorff space.

  \begin{enumerate}[(1),wide]
  \item\label{item:def:transv.2.supp:f} For a closed subset $F\subseteq X$, $\alpha$ is \emph{$F$-proper} if
    \begin{equation*}
      \left(\forall x\in X\right)
      \left(\exists\text{ nbhds }V_x\ni x,\ V_F\supset F\right)
      \quad:\quad
      V_F\perp V_x.
    \end{equation*}

  \item\label{item:def:transv.2.supp:phi} Similarly, $\alpha$ is \emph{$\Phi$-proper} for a family $\Phi\subseteq 2^X$ of closed subsets if it is $F$-proper for every $F\in \Phi$.
  \end{enumerate}
  Note that Bourbaki properness means precisely $x$-properness for all points $x\in X$.
\end{definition}

\begin{remark}\label{re:small}
  In the language of \cite[Definition 1.2.1]{palais_slice-nc}, $F$-properness means that $F$ has a \emph{small} neighborhood. It follows that $F$ itself is small:
  \begin{equation*}
    \left(\forall x\in X\right)
    \left(\exists\text{ nbhd }V_x\ni x\right)
    \quad:\quad
    F\perp V_x.
  \end{equation*}
  For the applications of interest below it will make little difference whether we work with small closed sets or closed sets with small neighborhoods, for those sets will range over paracompactifying families of supports. 
\end{remark}

\begin{examples}\label{exs:trnsvrs}
  \begin{enumerate}[(1),wide]
  \item Actions of compact groups are of course $\Phi$-proper for arbitrary $\Phi$.

  \item Bourbaki-proper actions on Hausdorff spaces are $\mc$-proper (recall \Cref{eq:cl.c} that $\mc$ denotes the family of compact subsets). See for instance \cite[Theorem 11, (1) $\Leftrightarrow$ (2)]{2301.05325v3}, which does not use the $1^{st}$ countability assumption on $X$ made there crucially. 

  \item\label{item:exs:trnsvrs:lines} The horizontal flow
    \begin{equation*}
      \bR^2\times \bR
      \ni
      ((x,y),t)
      \xmapsto{\quad}
      (x+t,y)
      \in \bR^2
    \end{equation*}
    is $\Phi$-proper for $\Phi:=\left\{\text{vertical lines}\right\}$.

  \item\label{item:exs:trnsvrs:inv.prop} An action is $F$-proper for some non-empty \emph{invariant} $F$ precisely when the operating group is compact.
  \end{enumerate}
\end{examples}

The simple observation in \Cref{le:coco} will produce new $F$-proper actions out of old. Recall \cite[Proposition B.2.2]{bdv} that closed subgroup $\bH\le \bG$ of a locally compact Hausdorff group is \emph{cocompact} if $\bG/\bH$ is compact (\cite[Definition 1.10]{ragh} calls such subgroups \emph{uniform}).

\begin{lemma}\label{le:coco}
  Let $X\times \bG\xrightarrow{\alpha}X$ be an action of a locally compact Hausdorff group on a Hausdorff space.

  If $\bH\le \bG$ is a closed cocompact subgroup, $\alpha$ is $F$-proper for closed $F\subseteq X$ (or Bourbaki/Palais proper) if and only if its restriction $\alpha^{\bH}$ to $\bH$ is respectively so.
\end{lemma}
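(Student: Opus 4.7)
The forward direction (from $\bG$ to $\bH$) should essentially be immediate: if $\braket{V_F:V_x}_{\alpha}\subseteq \bG$ is relatively compact, then since $\bH$ is closed in $\bG$, the intersection $\braket{V_F:V_x}_{\alpha}\cap \bH = \braket{V_F:V_x}_{\alpha^{\bH}}$ has closure in $\bH$ equal to its closure in $\bG$, which is compact as a closed subset of $\ol{\braket{V_F:V_x}_{\alpha}}$. The same $V_x,V_F$ that witness $F$-properness (resp.\ the same quantifier schema for Bourbaki and Palais properness) transport verbatim.

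The reverse direction is the substantive one. The plan is to leverage cocompactness by fixing a compact set $K\subseteq \bG$ with $\bG=K\bH$ (obtained in standard fashion: cover the compact quotient $\bG/\bH$ by finitely many translates of the image of a precompact open neighborhood of $e\in\bG$, then take $K$ to be the corresponding finite union of precompact closures). Given the target point $x\in X$ for which we seek neighborhoods establishing $F$-properness of $\alpha$, I would apply $F$-properness of $\alpha^{\bH}$ at each of the points $xk$, $k\in K$: this furnishes neighborhoods $V_{xk}\ni xk$ and $V_F^{(k)}\supseteq F$ with $V_{xk}\perp^{\alpha^{\bH}}V_F^{(k)}$. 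Continuity of $\alpha$ at $(x,k)$ provides neighborhoods $V_x^{(k)}\ni x$ and $W_k\ni k$ with $V_x^{(k)}W_k\subseteq V_{xk}$. Compactness of $K$ reduces to finitely many $W_{k_1},\dots,W_{k_n}$; setting $V_x:=\bigcap_i V_x^{(k_i)}$ and $V_F:=\bigcap_i V_F^{(k_i)}$ yields the candidate neighborhoods.

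The verification is then a short computation: if $g\in \braket{V_F:V_x}_{\alpha}$, write $g=kh$ with $k\in K$, $h\in \bH$, pick $i$ with $k\in W_{k_i}$, and observe
\begin{equation*}
  V_x g = V_x\, k\, h\ \subseteq\ V_x^{(k_i)}W_{k_i}\cdot h\ \subseteq\ V_{xk_i}h,
  \qquad
  V_F\subseteq V_F^{(k_i)},
\end{equation*}
so that $V_{xk_i}h\cap V_F^{(k_i)}\ne\emptyset$, forcing $h\in \braket{V_F^{(k_i)}:V_{xk_i}}_{\alpha^{\bH}}$, a relatively compact subset of $\bH$ (hence of $\bG$). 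Therefore
\begin{equation*}
  \braket{V_F:V_x}_{\alpha}\ \subseteq\ K\cdot\bigcup_{i=1}^{n}\ol{\braket{V_F^{(k_i)}:V_{xk_i}}_{\alpha^{\bH}}},
\end{equation*}
and the right-hand side is compact as a product of two compact sets (using \Cref{re:cpct.clsd} for closedness of set products only in passing, the compactness itself being automatic).

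For Bourbaki properness one runs the same argument with $F=\{x'\}$ replaced by point neighborhoods, and for Palais properness one uses the stronger quantifier structure: the single $V_x$ constructed above depends only on $x$ (since the $V_{xk_i}$ come from Palais properness of $\alpha^{\bH}$ at $xk_i$), while for each new target $y$ one chooses $V_y^{(k_i)}$ via the Palais property of $\alpha^{\bH}$ and sets $V_y:=\bigcap_i V_y^{(k_i)}$; the same estimate goes through. The main (mild) obstacle is simply keeping the quantifier bookkeeping straight in the Palais case; the cocompactness-plus-action-continuity argument itself is clean.
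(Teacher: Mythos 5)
Your proof is correct and follows essentially the same route as the paper's: decompose $\bG=K\bH$ via cocompactness, cover a compact set attached to $x$ and $K$ by finitely many witnesses of $F$-properness for $\alpha^{\bH}$, intersect to get $V_x$ and $V_F$, and bound $\braket{V_F:V_x}_{\alpha}$ inside $K$ times a relatively compact subset of $\bH$. The only (immaterial) difference is that you produce $V_x$ from joint continuity of the action at the points $(x,k_i)$, whereas the paper covers $xK$ directly and takes $V_x:=X\setminus\left(X\setminus\bigcup_i V_i\right)K$.
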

\begin{proof}
  That the properties transport from $\alpha$ to $\alpha^{\bH}$ is immediate, given that 
  \begin{equation*}
    \braket{A:B}_{\alpha^{\bH}}
    \subseteq
    \braket{A:B}_{\alpha}
    ,\quad
    \forall A,B\subseteq X.
  \end{equation*}
  For the converse we focus on $F$-properness, the other claims being analogous. Consider a compact $K=K^{-1}\subseteq \bG$ with $\bH K=\bG$: pick a compact neighborhood $V_g\ni g$ for every $g\in \bG$, cover $\bG/\bH$ with finitely images of the $V_g$, and take for $K$ the union of those $V_g$ (symmetrizing to $K\rightsquigarrow K\cup K^{-1}$ if necessary).

  For neighborhoods $V_x\ni x$ and $V_F\supset F$ we have 
  \begin{equation*}
    \braket{V_F:V_x}_{\alpha}
    \subseteq
    \braket{V_F:V_xK}_{\alpha^{\bH}}.
  \end{equation*}
  It will thus suffice to show that neighborhoods $V_F$ and $V_x$ can be chosen so as to make the latter set $\braket{V_F:V_xK}_{\alpha^{\bH}}$ relatively compact (i.e. $V_F\perp^{\alpha^{\bH}} V_xK$, in the notation of \Cref{res:prop.act}\Cref{item:res:prop.act:perp.not}). To that end, cover the compact set $xK$ with finitely many open $V_i$, $1\le i\le n$ so that
  \begin{equation*}
    \forall 1\le i\le n
    \quad:\quad
    W_i\perp^{\alpha^{\bH}}V_i
    \quad\text{for some nbhd }W_i\supset F
  \end{equation*}
  (possible by the assumed $F$-properness of $\alpha^{\bH}$). We then have
  \begin{equation*}
    \bigcap_{i=1}^n W_i
    \quad
    \perp^{\alpha^{\bH}}
    \quad
    \bigcup_{i=1}^n V_i.
  \end{equation*}
  Set $V_F:=\bigcap_i W_i$, and take for $V_x$ any neighborhood of $x$ for which $V_xK\subseteq \bigcup_i V_i$:
  \begin{equation*}
    V_x:=X\setminus \left(X\setminus \bigcup_{i=1}^n V_i\right)K
  \end{equation*}
  will do for instance, since $x$ does not belong to the (closed: \Cref{re:cpct.clsd}) set $\left(X\setminus \bigcup_i V_i\right)K$.
\end{proof}

We revisit, briefly, the contrast between Bourbaki and Palais properness recalled in \Cref{res:prop.act}\Cref{item:res:prop.act:bbt}, cocompact subgroups being somewhat topical to the proof of the following extension of Bebutov's example on \cite[p.79]{MR271925}. For locally compact groups $\bH$ and $\bG$ we write $\bH\preceq \bG$ to indicate that $\bH$ embeds as a closed subgroup into $\bG$.


\begin{proposition}\label{pr:bbt.ext}
  Consider the following conditions on a locally compact group $\bG$.
  \begin{enumerate}[(a),wide]
  \item\label{item:pr:bbt.ext:bp} All Bourbaki-proper $\bG$-actions on Tychonoff spaces are also Palais-proper.

  \item\label{item:pr:bbt.ext:bp.free} Condition \Cref{item:pr:bbt.ext:bp} holds for \emph{free} actions.

  \item\label{item:pr:bbt.ext:bp.free.prc} Condition \Cref{item:pr:bbt.ext:bp.free} holds for free actions on paracompact spaces. 
    
  \item\label{item:pr:bbt.ext:zg} $\bG$ admits no closed embeddings $\bZ\le \bG$ (notation: $\bZ\not\preceq \bG$).

  \item\label{item:pr:bbt.ext:cpctop} $\bG$ is a union of compact open subgroups. 
  \end{enumerate}
  The following implications hold:
  \begin{equation*}
    \text{\Cref{item:pr:bbt.ext:bp}}
    \xRightarrow{\quad}
    \text{\Cref{item:pr:bbt.ext:bp.free}}
    \xRightarrow{\quad}
    \text{\Cref{item:pr:bbt.ext:bp.free.prc}}
    \xRightarrow{\quad}
    \text{\Cref{item:pr:bbt.ext:zg}}
    \xLeftrightarrow{\quad}
    \text{\Cref{item:pr:bbt.ext:cpctop}}.
  \end{equation*}
\end{proposition}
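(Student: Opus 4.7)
The plan splits into three parts matching the claimed chain of implications.

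\textbf{Easy steps.} The chain $\text{(a)}\Rightarrow\text{(b)}\Rightarrow\text{(c)}$ is a pure restriction of the class of actions considered. For $\text{(e)}\Rightarrow\text{(d)}$: the generator $g$ of any closed $\bZ\hookrightarrow\bG$ would lie in some compact open subgroup $U$, forcing $\langle g\rangle\le U$; but a discrete subspace of a compact Hausdorff group must be finite, a contradiction.

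\textbf{Structural equivalence $\text{(d)}\Leftrightarrow\text{(e)}$.} For the nontrivial direction $\text{(d)}\Rightarrow\text{(e)}$, first reduce to the totally disconnected case by showing $\bG_0$ is compact: otherwise the structure theory of connected locally compact groups (non-compact connected LC groups contain a closed one-parameter subgroup isomorphic to $\bR$) gives $\bR\preceq\bG_0$ and hence $\bZ\preceq\bG$. Next, every element of the tdlc quotient $\bG/\bG_0$ has relatively compact cyclic hull: the classification of non-compact monothetic LCA groups forces any non-compact such closure to be $\bZ$; and a closed $\bZ\preceq\bG/\bG_0$ lifts to a closed $\bZ\preceq\bG$ because for any lift $g$ of the generator, $\overline{\langle g\rangle}\subseteq\bG$ is an abelian monothetic group whose image in $\bG/\bG_0$ is the non-compact $\bZ$, hence is itself non-compact, hence $\cong\bZ$ by the same classification. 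Finally, invoke the standard tdlc fact that every compact subgroup $K$ lies in some compact open subgroup: from a compact open $V$ (van Dantzig), $[K:K\cap V]<\infty$ yields only finitely many $K$-conjugates of $V$ (since $kVk^{-1}$ depends only on the coset of $k$ modulo $K\cap V$), whose intersection $W$ is a compact open subgroup normalized by $K$, so $KW$ is the sought compact open subgroup. Pulling compact open subgroups back along $\pi:\bG\twoheadrightarrow\bG/\bG_0$ realizes $\bG$ as a union of compact open subgroups.

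\textbf{$\text{(c)}\Rightarrow\text{(d)}$ by induced counterexample.} Argue the contrapositive: given a closed $\bZ\preceq\bG$, produce a free Bourbaki-proper paracompact Tychonoff $\bG$-action that is not Palais-proper. Bebutov's $\bR$-space $Y$ from \cite[p.79]{MR271925} (cf.\ also \cite[Example 2.11.13]{bs_stab_2002}) is free, Bourbaki-proper, paracompact, Tychonoff, but not Palais-proper as an $\bR$-action; restricting to the cocompact closed subgroup $\bZ\le\bR$, \Cref{le:coco} transfers both Bourbaki-properness and non-Palais-properness to the $\bZ$-action on $Y$. Now form the associated bundle $X:=\bG\times_{\bZ}Y=(\bG\times Y)/{\sim}$, with $(g,y)\sim(gn,n^{-1}y)$ for $n\in\bZ$, and let $\bG$ act by left translation on the first factor. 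Then $X$ locally trivializes as an open subset of $\bG/\bZ$ times $Y$, so Tychonoff- and paracompactness transfer (possibly after first reducing to the $\sigma$-compact compactly generated open subgroup $U\bZ\le\bG$ for a compact open neighborhood $U\ni e$). The $\bG$-action is free and Bourbaki-proper by standard properties of induction from a closed subgroup. Crucially, the map $y\mapsto[e,y]$ is a closed $\bZ$-equivariant embedding $Y\hookrightarrow X$ (closed because $\bZ\subseteq\bG$ is closed and the quotient map $\bG\times Y\to X$ is open), so non-Palais-properness of $\bZ$ on $Y$ propagates to non-Palais-properness of $\bZ$ on $X$ (the obstructing pair of points and bad sequences persist in the ambient space), and hence to non-Palais-properness of the full $\bG$-action, since Palais-properness of $\bG$ on $X$ would restrict to Palais-properness on any closed subgroup---in particular on $\bZ\le\bG$.

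\textbf{Main obstacle.} The two delicate points are the paracompactness of $X$ when $\bG$ is not $\sigma$-compact (handled by passing first to the compactly generated open subgroup $U\bZ$ and then reassembling via $\bG$-translates) and the structural ingredient in $\text{(d)}\Rightarrow\text{(e)}$ that non-compact connected LC groups contain a closed $\bR$ (a standard but non-elementary fact from pro-Lie theory). The remaining verifications are routine unwindings of the definitions of Bourbaki- and Palais-properness through the induction construction.
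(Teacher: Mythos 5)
Your proposal follows the paper's architecture almost exactly --- the formal implications \Cref{item:pr:bbt.ext:bp} $\Rightarrow$ \Cref{item:pr:bbt.ext:bp.free} $\Rightarrow$ \Cref{item:pr:bbt.ext:bp.free.prc}, the Bebutov example restricted to the cocompact $\bZ\le\bR$ via \Cref{le:coco}, the induced (i.e.\ twisted-product) $\bG$-action for \Cref{item:pr:bbt.ext:bp.free.prc} $\Rightarrow$ \Cref{item:pr:bbt.ext:zg}, and the reduction of \Cref{item:pr:bbt.ext:zg} $\Rightarrow$ \Cref{item:pr:bbt.ext:cpctop} to totally disconnected groups by first forcing $\bG_0$ to be compact --- but it diverges genuinely in the totally disconnected endgame. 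Where the paper invokes Willis' scale function \cite{MR1299067,zbMATH00797049} (the scale is trivial when $\bZ\not\preceq\bG$, so each $g$ normalizes a compact open $\bK_g$ and $\bK_g\rtimes\braket{g}$ is compact open), you use Weil's dichotomy for monothetic locally compact groups ($\overline{\braket{g}}$ is either compact or $\cong\bZ$) together with the standard fact that a compact subgroup of a totally disconnected locally compact group lies in a compact open subgroup. Both are correct; yours is arguably more classical and self-contained, while the paper's produces the compact open subgroup containing $g$ in one stroke. Your lifting of a closed $\bZ$ along the proper surjection $\bG\twoheadrightarrow\bG/\bG_0$ (a non-compact monothetic closure must again be $\bZ$) is likewise a clean substitute for the paper's remark that torsion-free elements generating closed subgroups move up and down such surjections.

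The one place you are thinner than you should be is the paracompactness (and separation) of $X=\bG\times_{\bZ}Y$. ``Locally trivializes as an open subset of $\bG/\bZ$ times $Y$, so paracompactness transfers'' is not by itself a proof --- paracompactness is not a local property --- and your parenthetical fix (pass to the open, compactly generated subgroup generated by $\bZ$ and a compact neighborhood of $1$, then reassemble over cosets) is asserted rather than carried out. The paper devotes \Cref{le:tw.prod.props} to exactly this point (via Abels' closed fundamental sets for quotient closedness, plus Palais' results for complete regularity), and its \Cref{res:when.gh.bund} records precisely your local-triviality route, completed by covering $\bG/\bZ$ with a locally finite family of trivializing compacta and applying a closed sum theorem; note also that Tychonoffness need not be argued separately once Hausdorffness and paracompactness are in hand. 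So the idea is sound and completable, but as written this step is a sketch; everything else checks out.
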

\begin{proof}
  \begin{enumerate}[label={},wide]
  \item\textbf{\Cref{item:pr:bbt.ext:bp}
      $\Rightarrow$
      \Cref{item:pr:bbt.ext:bp.free}
      $\Rightarrow$
      \Cref{item:pr:bbt.ext:bp.free.prc}} are formal.
    
  \item\textbf{\Cref{item:pr:bbt.ext:bp.free.prc} $\Rightarrow$ \Cref{item:pr:bbt.ext:zg}:} The already-cited \cite[Example 2.11.13]{bs_stab_2002} is of a Bourbaki- but not Palais-proper action by $\bR$, hence one by the cocompact subgroup $\bZ\le \bR$ by \Cref{le:coco}. In that example the space being acted upon is metric (an open subset of $\bR^2$), hence \cite[Theorem 20.9]{wil_top} paracompact.

    Consider, now, an action $Y\times \bH\xrightarrow{\alpha^{\bH}} Y$ with the requisite properties (free, Bourbaki-proper, not Palais-proper, with paracompact carrying space $Y$) and a closed embedding $\bH\le \bG$. The corresponding \emph{twisted product} \cite[\S II.2]{bred_cpct-transf}
    \begin{equation}\label{eq:tw.prod}
      X:=Y\times_{\bH}\bG
      :=
      Y\times \bG\bigg/\left((y,g)\sim \left(yh,h^{-1}g\right),\ h\in \bH\right).
    \end{equation}
    carries an obvious right $\bG$-action $\alpha^{\bG}$. That freeness transports over from $\alpha^{\bH}$ to $\alpha^{\bG}$ is immediate: the $\alpha^{\bG}$-isotropy group
    \begin{equation*}
      \bG_{[y,g]}
      :=
      \left\{g'\in \bG\ :\ [y,g]g'=[y,g]\right\}
    \end{equation*}
    of the class $[y,g]\in X=Y\times_{\bH}\bG$ of $(y,g)\in Y\times \bG$ is $g^{-1}\bH_y g$, trivial if $\alpha^{\bH}$ is free. It is not difficult to verify that Bourbaki properness carries over from $\alpha^{\bH}$ to $\alpha^{\bG}$: one approach would be to first prove Bourbaki properness for the $(\bH\times \bG)$-action on $Y\times \bG$ whereby $\bH$ acts as in \Cref{eq:tw.prod} and $\bG$ by translation on the right-hand factor, and then apply \Cref{le:pass2quot.prpr} below.

    That Palais properness fails for $\alpha^{\bG}$ if it does for $\alpha^{\bH}$ is even easier to see: a Palais-proper $\bG$-action would restrict to one by $\bH$ on the $\bH$-invariant subspace $Y\subseteq X$, and that restricted action is nothing but the original $\alpha^{\bH}$.

    Finally, the assumed paracompactness of $Y$ entails that of $X=Y\times_{\bH}\bG$ by \Cref{le:tw.prod.props} below.

  \item\textbf{\Cref{item:pr:bbt.ext:cpctop} $\Rightarrow$ \Cref{item:pr:bbt.ext:zg}} is obvious: arbitrary elements being contained in compact subgroups, none can generate closed discrete subgroups.

  \item\textbf{\Cref{item:pr:bbt.ext:zg} $\Rightarrow$ \Cref{item:pr:bbt.ext:cpctop}:}   Recall first (\cite[\S 4.6, Theorem]{mz} and \cite[Theorem 3.39]{hm_pro-lie-bk}) that every \emph{almost-connected} locally compact Hausdorff group (i.e. \cite[p.2, Definition 1]{hm_pro-lie-bk} having compact component group $\bH/\bH_0$) is a \emph{cofiltered limit} \spr{04AU}
  \begin{equation*}
    \bH
    \cong
    \varprojlim_{\substack{\bK\trianglelefteq \bH\\\bK\text{ compact}}}
    \left(\text{Lie quotient }\bH/\bK\right).
  \end{equation*}
  The quotient maps $\bH\xrightarrowdbl{}\bH/\bK$ are proper and hence closed, so torsion-free elements generating closed subgroups move up and down the surjection. Since almost-connected \emph{Lie} groups contain some closed $\bZ$ precisely when non-compact (in which case they contain a closed one-parameter subgroup $\bR$, etc.), this shows that for almost-connected $\bH$ the property $\bZ\not\preceq \bH$ is in fact equivalent to compactness.

  Applying this to $\bH:=\bG_0$ we can assume the identity component compact. The properness of $\bG\xrightarrowdbl{}\bG/\bG_0$ then equates the $\bZ\not\preceq$ property for the two groups, so that the substitution $\bG \rightsquigarrow \bG/\bG_0$ reduces the problem to \emph{totally disconnected} locally compact groups (i.e. \cite[Definition 29.1]{wil_top} one whose connected components are points).

  The conclusion will now follow from the theory of {\it scale functions}
  \begin{equation*}
    \bG
    \xrightarrow[\quad\text{power-preserving: $\mathsf{s}(x^n)=\mathsf{s}(x)^n$}\quad]{\quad\text{continuous }\mathsf{s}\quad}
    \left(\bZ_{>0},\cdot\right)
  \end{equation*}
  developed in \cite{MR1299067} (also \cite[\S\S 2 and 4]{zbMATH00797049}) for totally disconnected locally compact groups. The continuity and power preservation mean that whenever $\bZ\not\preceq \bG$ the scale function must be trivial, whereupon \cite[\S 2, item I on p.90]{zbMATH00797049} every element $g\in \bG$ normalizes some compact open subgroup $\bK_g\le \bG$. But then an arbitrary $g\in \bG$ belongs to $\bK_g\rtimes\braket{g}$, which semidirect product is compact (and of course open): $g$ by assumption does not generate a closed copy of $\bZ$, so some power $g^n$, $n\in \bZ_{>0}$ must belong to the neighborhood $\bK_g\ni 1\in \bG$.
  \end{enumerate}
\end{proof}

We record the following simple observation on property permanence under quotients. 

\begin{lemma}\label{le:tw.prod.props}
  Let $\bH\le \bG$ be a closed subgroup of a locally compact group, and $Y\times \bH\to Y$ an action on a Hausdorff space.
  
  If $Y$ is (completely) regular or paracompact then so, respectively, is the twisted product $Y\times_{\bH}\bG$.
\end{lemma}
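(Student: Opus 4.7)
The twisted product $X := Y \times_\bH \bG$ is the quotient of $Y \times \bG$ by the right $\bH$-action $(y, g) \cdot h = (yh, h^{-1}g)$. This action is free (by inspection of the second factor, where $\bH$ acts by left translation on $\bG$) and proper: $\bH$ closed in the locally compact $\bG$ makes its left-translation action on $\bG$ proper, and the product action inherits properness (for a compact set in $(Y \times \bG)\times (Y\times \bG)$, the compatibility $h^{-1}g = g'$ forces the $\bH$-component of any preimage point into a compact subset of $\bH$). Consequently, $\pi : Y \times \bG \to X$ is open, the $\bH$-orbits in $Y \times \bG$ are closed, and the image $X$ is Hausdorff.

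My plan is to reduce the claim to transfer principles for quotients by free proper actions of locally compact Hausdorff groups. First I verify that $Y \times \bG$ itself has the desired property when $Y$ does: (complete) regularity is stable under products, so $Y \times \bG$ is (completely) regular if $Y$ is. For paracompactness, I would invoke that $\bG$ admits an open $\sigma$-compact subgroup $\bG_0 := \bigcup_n K^n$ (with $K \ni 1$ a compact symmetric neighborhood), giving a clopen decomposition $\bG = \bigsqcup_\alpha g_\alpha \bG_0$ into $\sigma$-compact pieces. Then $Y \times \bG = \bigsqcup_\alpha Y \times g_\alpha \bG_0$ is a disjoint union of paracompact spaces (the product of a paracompact space with a $\sigma$-compact locally compact Hausdorff space being paracompact by the classical theorem), hence itself paracompact.

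It remains to transfer each property through $\pi$. For (complete) regularity: given $x_0 = [y_0, g_0] \in X$ and a closed $C \subseteq X$ with $x_0 \notin C$, the orbit $\cO := (y_0, g_0) \bH \subseteq Y \times \bG$ is closed (by properness) and disjoint from the closed $\bH$-saturated set $\pi^{-1}(C)$. I would appeal to the standard principle that for a proper action of an LC Hausdorff group on a (completely) regular Hausdorff space, a closed orbit and a disjoint closed saturated set can be separated by disjoint $\bH$-invariant open sets (respectively by an $\bH$-invariant continuous $[0,1]$-valued function), and such an invariant separator descends through $\pi$ to separate $x_0$ from $C$ in $X$. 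For paracompactness, an equivalent perspective identifies $X$ as a fiber bundle over $\bH\backslash\bG$ (via $[y,g] \mapsto \bH g$, well-defined because $\bH h^{-1}g = \bH g$) with fiber $Y$; the base $\bH\backslash\bG$ is locally compact Hausdorff (hence paracompact by the same clopen-$\sigma$-compact decomposition argument), and local trivializations, combined with paracompactness of the fiber, give paracompactness of $X$ via the usual bundle construction of locally finite refinements.

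The main obstacle I foresee is in the paracompactness transfer: the bundle viewpoint requires local cross-sections for $\bG \to \bH\backslash\bG$, automatic for Lie subgroups but requiring a Mostert-type theorem in the full LC Hausdorff generality; the alternative direct descent of paracompactness through a free proper quotient of an LC group, while classical in the literature on transformation groups, is delicate outside the case of compact $\bH$ (where averaging over $\bH$ gives invariant refinements directly). The (complete) regularity transfer is comparatively cleaner, reducing to the existence of $\bH$-invariant separating functions for proper actions, which for free proper $\bH$-actions follows from the existence of invariant tubular neighborhoods around closed orbits.
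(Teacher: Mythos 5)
Your overall architecture --- lift the property to $Y\times\bG$, then push it down through the quotient --- matches the paper's, and the lifting half is fine (your clopen decomposition of $\bG$ into $\sigma$-compact cosets is a perfectly serviceable substitute for the product-paracompactness citation the paper uses). The descent, however, has two genuine gaps. For (complete) regularity you invoke a ``standard principle'' that for a \emph{proper} action of a locally compact group a closed orbit and a disjoint closed saturated set can be separated $\bH$-invariantly. If ``proper'' means Bourbaki-proper --- and your properness verification for the diagonal action only establishes the Bourbaki version --- this principle is false: Bebutov's example (a free Bourbaki-proper $\bZ$-action on an open, hence metrizable, subset of $\bR^2$ whose quotient fails to be completely regular, discussed around \Cref{res:prop.act} and \Cref{res:soft.not.desc}) is a direct counterexample. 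What actually makes the descent work is that the diagonal $\bH$-action on $Y\times\bG$ is \emph{Palais}-proper: it maps $\bH$-equivariantly onto the translation action of $\bH$ on $\bG$, which is Palais-proper by Antonyan's theorem, and Palais properness pulls back along equivariant maps. That upgrade is precisely what Palais's complete-regularity permanence result and \Cref{le:reg.perm} require, and it is missing from your argument.

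For paracompactness you explicitly flag the descent as an unresolved obstacle, and it is: the fiber-bundle viewpoint over $\bH\backslash\bG$ needs local sections of $\bG\to\bH\backslash\bG$, which in full locally compact generality is not automatic (the paper's \Cref{res:when.gh.bund} records that route only for the case where local triviality is available, e.g.\ $\bH$ Lie --- which would in fact have sufficed for the application in \Cref{pr:bbt.ext}, but not for the lemma as stated). The paper's proof avoids sections entirely: Antonyan's theorem supplies a closed $\bH$-fundamental set $F\subseteq\bG$ for the translation action, so $Y\times F$ is a closed fundamental set for the diagonal action, the restriction of the quotient map to $Y\times F$ is then a closed continuous surjection onto $Y\times_{\bH}\bG$, and paracompactness passes along closed continuous surjections onto Hausdorff spaces. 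Without some such device your proposal does not complete this half of the lemma.
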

\begin{proof}
  Observe first that (complete) regularity and paracompactness all lift from $Y$ to the product $Y\times \bG$: products of (completely) regular spaces are again such \cite[Theorem 2.3.11]{eng_top_1989}, and a product of a paracompact space with a locally compact paracompact one (such as $\bG$ \cite[\S III.4.6, Proposition 13]{bourb_top_en_1}) is paracompact \cite[Proposition 1.11]{zbMATH03603988}.

  \begin{enumerate}[(I),wide]
  \item \textbf{: (Complete) regularity.} Observe that the Palais properness \cite[Theorem 1.1]{zbMATH05808276} of the translation action of $\bH$ on $\bG$ ensures (\cite[post Definition 1.3]{zbMATH03603988} or \cite[Proposition 1.3.3]{palais_slice-nc}) the same for the action
    \begin{equation}\label{eq:diag.h.act}
      Y\times \bG
      \ni
      (y,\ g)
      \xmapsto{\quad\triangleleft h\quad}
      (yh,\ h^{-1}g)
      ,\quad
      h\in \bH
    \end{equation}
    of \Cref{eq:tw.prod}. That the quotient $Y\times_{\bH}\bG\cong \left(Y\times \bG\right)/\bH$ is completely regular if $Y\times \bG$ is follows from \cite[Proposition 1.2.8]{palais_slice-nc}.

    As to \emph{plain} regularity, we isolate its permanence under quotients by Palais-proper actions in \Cref{le:reg.perm} below. 
    
  \item \textbf{: Paracompactness.} \cite[Theorem 1.2]{zbMATH05808276} ensures the existence of a closed \emph{$\bH$}-fundamental subset $F\subseteq \bG$: small for the translation $\bH$-action, such that $\bH F=\bG$. The product
    \begin{equation*}
      Y\times F
      \subseteq
      Y\times \bG
    \end{equation*}
    is then closed and $\bH$-fundamental for \Cref{eq:diag.h.act} and hence \cite[Proposition 1.4 (a) $\Rightarrow$ (b)]{zbMATH03603988} the restriction the quotient map $Y\times \bG\xrightarrowdbl{} Y\times_{\bH}\bG$ is closed. The conclusion now follows from the fact \cite[5.1.33]{eng_top_1989} that paracompactness transports over to Hausdorff images by closed continuous maps.
  \end{enumerate}
\end{proof}

The already-invoked \cite[Proposition 1.2.8]{palais_slice-nc}, proving $T_{3\frac 12}$ permanence under Palais-proper quotients, will not quite yield \Cref{le:reg.perm} below (appealed to in the proof of \Cref{le:tw.prod.props}): the former's proof used complete regularity crucially. It is a simple matter, however, to prove the analogous permanence result for ordinary regularity. 

\begin{lemma}\label{le:reg.perm}
  The quotient $X/\bG$ of a regular space through a Palais-proper action $X\times \bG \to X$ of a locally compact Hausdorff group is again regular. 
\end{lemma}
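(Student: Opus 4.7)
The plan is to lift the separation problem to $X$ via the (always open) quotient map $\pi\colon X\to X/\bG$: given $p\in X/\bG$ and a closed $C\subseteq X/\bG$ with $p\notin C$, I pick $x\in \pi^{-1}(p)$ and work with the closed $\bG$-invariant set $A:=\pi^{-1}(C)\not\ni x$. It will suffice to produce an open neighborhood $N\ni x$ and an open $\bG$-invariant $W\supseteq A$ with $N\bG\cap W=\emptyset$, for then $\pi(N)$ and $\pi(W)$ are disjoint open neighborhoods of $p$ and $C$ in $X/\bG$.

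First I would combine Palais properness \Cref{eq:pal.prop} at $x$, which supplies a small open neighborhood $U_0\ni x$ in the sense of \Cref{re:small}, with the regularity of $X$: since $A$ is closed and $x\notin A$, regularity lets me shrink to an open $U_1\ni x$ with $\bar{U_1}\subseteq U_0$ and $\bar{U_1}\cap A=\emptyset$.

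The crux is to verify that the saturation $\bar{U_1}\bG$ is closed in $X$. Given $y\notin \bar{U_1}\bG$, the smallness of $U_0$ furnishes an open neighborhood $M_y\ni y$ and a compact $K\subseteq \bG$ with $\{g\in\bG\ :\ M_y g\cap U_0\ne\emptyset\}\subseteq K$. The set $\bar{U_1}K^{-1}$ is closed by \Cref{re:cpct.clsd} and, being contained in $\bar{U_1}\bG$, misses $y$; hence $M_y':=M_y\setminus \bar{U_1}K^{-1}$ is an open neighborhood of $y$. Any $z=ug\in M_y'\cap \bar{U_1}\bG$ with $u\in\bar{U_1}$, $g\in\bG$ would yield $u=zg^{-1}\in M_y g^{-1}\cap U_0$, forcing $g^{-1}\in K$ and hence $z\in \bar{U_1}K^{-1}$, a contradiction.

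With $\bar{U_1}\bG$ closed, I take $N:=U_1$ and $W:=X\setminus \bar{U_1}\bG$: the latter is open and $\bG$-invariant, and $\bG$-invariance of $A$ together with $A\cap \bar{U_1}=\emptyset$ yield $A\cap \bar{U_1}\bG=\emptyset$, so $A\subseteq W$. Pushing $N$ and $W$ down through the open map $\pi$ separates $p$ from $C$ as required. The main obstacle I expect is exactly the closedness of $\bar{U_1}\bG$; this is where Palais (rather than merely Bourbaki) properness enters essentially, because what is used is the existence of a single neighborhood of $x$ small with respect to \emph{all} other points, not merely mutual thinness of point pairs.
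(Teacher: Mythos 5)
Your argument is correct, and it shares the paper's overall skeleton: lift the problem to $X$, use Palais properness to get a small neighborhood of $x$ (\Cref{re:small}), shrink it by regularity of $X$ to a closed neighborhood disjoint from the saturated closed set, and separate via the saturation. Where you part ways is in the key technical step. The paper takes the closedness of the saturation $V_x\bG$ for granted, citing \cite[Proposition 1.4, (a) $\Rightarrow$ (b)]{zbMATH03603988}, and instead devotes the bulk of its proof to \Cref{eq:good.y.nbhds}: around each point $y$ of the closed invariant set it builds a neighborhood $W_y$ whose saturation misses $V_x$, using disjoint open neighborhoods of the compact set $yK$ and of $V_x$ and the set $\left(W\cap\bigcap_{k\in K}U_yk^{-1}\right)^{\circ}$. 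You do neither of these things: you prove directly that $\overline{U_1}\bG$ is closed --- in effect reproving the cited Abels statement in the case at hand, from the same two ingredients (smallness of $U_0$ and \Cref{re:cpct.clsd} applied to $\overline{U_1}K^{-1}$) --- and then the $\bG$-invariance of $A$ makes the separation immediate by passing to the complement, with no per-point construction on $A$ at all. Your route is self-contained and somewhat leaner; the paper's trades that for an external citation plus the extra construction \Cref{eq:good.y.nbhds}. Both versions correctly identify where Palais (rather than merely Bourbaki) properness is indispensable, namely the existence of a single neighborhood of $x$ thin relative to a neighborhood of \emph{every} other point.
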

\begin{proof}
  The claim amounts to this: if the orbit $x\bG$ of $x\in X$ does not intersect the $\bG$-invariant closed set $F=\overline{F}\subseteq X$, then $x$ has a closed $\bG$-invariant neighborhood that also fails to meet $F$.  

  By the properness assumption $x$ has a small neighborhood $V_x$ in the sense of \cite[Definition 1.2.1]{palais_slice-nc}:
  \begin{equation*}
    \left(\forall y\in X\right)
    \left(\exists\text{ nbhd }W\ni y\right)
    \quad:\quad
    W\perp V_x.
  \end{equation*}
  Furthermore, by regularity we may as well assume
  \begin{equation*}
    V_x=\overline{V_x}
    \quad\text{and}\quad
    V_x\cap F=\emptyset.
  \end{equation*}
  I now claim that
  \begin{equation}\label{eq:good.y.nbhds}
    \left(\forall y\in F\right)
    \left(\exists\text{ nbhd }W_y\ni y\right)
    \quad:\quad
    W_y\bG\cap V_x=\emptyset. 
  \end{equation}
  This would clinch the proof: $V_x\bG$ would be the desired $\bG$-invariant (closed: \cite[Proposition 1.4, (a) $\Rightarrow$ (b)]{zbMATH03603988}) neighborhood of $x$ intersecting $F$ trivially.

  To verify \Cref{eq:good.y.nbhds}, first employ smallness to conclude that for some neighborhood $W\ni y$ we have
  \begin{equation}\label{eq:wgvx}
    Wg\cap V_x=\emptyset
    ,\quad
    \forall g\in \bG\setminus K
    ,\quad
    K\subseteq \bG
    \text{ compact}.
  \end{equation}
  Because $yK$ and $V_x$ are closed disjoint subsets of a regular space with the former compact, they have \cite[Theorem 3.1.6]{eng_top_1989} disjoint open neighborhoods
  \begin{equation*}
    yK\subset U_y
    ,\quad
    V_x\subset U_x
    ,\quad U_y\cap U_x=\emptyset.
  \end{equation*}
  The set $\left(X\setminus U_y\right)K^{-1}\not\ni y$ being closed by \Cref{re:cpct.clsd},
  \begin{equation*}
    W_y
    :=
    \text{ the interior }
    \left(W\cap \bigcap_{k\in K} U_y k^{-1}\right)^{\circ}
  \end{equation*}
  contains $y$ and we have
  \begin{equation*}
    W_yk\subset U_y
    \xRightarrow{\quad}
    W_yk\cap V_x=\emptyset
    ,\quad
    \forall k\in K.
  \end{equation*}
  In conjunction with \Cref{eq:wgvx} (and $W_y\subseteq W$), this delivers \Cref{eq:good.y.nbhds}.
\end{proof}

\begin{remarks}\label{res:when.gh.bund}
  \begin{enumerate}[(1),wide]
  \item There are alternative approaches to the paracompactness of $Y\times_{\bH}\bG$, not relying on the crucial \cite[Theorem 1.2]{zbMATH05808276}, when $\bG\xrightarrowdbl{\quad} \bG/\bH$ is a locally trivial principal $\bH$-bundle (e.g. when $\bH$ is Lie \cite[Theorem 3]{most_sect}, which case it would have sufficed to consider in the proof of \Cref{pr:bbt.ext}: in the implication \Cref{item:pr:bbt.ext:bp.free.prc} $\Rightarrow$ \Cref{item:pr:bbt.ext:zg} we might as well have taken $\bH:=\bZ$):
    \begin{itemize}[wide]
      
    \item \cite[Theorem II.2.4]{bred_cpct-transf} realizes the twisted product $X$ as a locally trivial bundle $X\xrightarrowdbl{\pi}\bG/\bH$ with fiber $Y$ (assumed paracompact).
      
    \item Cover $\bG/\bH$ with the interiors of compact sets $K_i$ trivializing said bundle in the sense that
      \begin{equation*}
        \left(\pi^{-1}(K_i)\xrightarrowdbl{\pi}K_i\right)
        \quad\cong\quad
        \left(Y\times K_i\xrightarrowdbl[]{\quad\text{projection}\quad}K_i\right).
      \end{equation*}
      
    \item $\bG/\bH$ being paracompact \cite[\S III.4.6, Proposition 13]{bourb_top_en_1}, we may as well assume the family $\{K_i\}_i$ \emph{locally finite} \cite[preceding Theorem 1.1.11]{eng_top_1989}: every point of $\bG/\bH$ has a neighborhood intersecting only finitely many $K_i$.
      
    \item But then $\left\{Y\times K_i\right\}_i$ is a locally finite closed cover of $X$ by paracompact \cite[Theorem 5.1.36]{eng_top_1989} spaces, so $X$ must be paracompact \cite[p.59, post ($\Sigma$)]{zbMATH03289133}.
    \end{itemize}
    The argument is very much along the lines of that showing that products of paracompact and locally compact paracompact spaces are again paracompact (\cite[Proposition 1.11]{zbMATH03603988}, say).

  \item By contrast to (complete) regularity or paracompactness, \emph{normality} certainly need not be inherited from $Y$ by $Y\times_{\bH}\bG$. Assuming $\bH$ trivial to simplify matters, $Y\times \bG$ will fail to be normal for \emph{any} non-discrete locally compact group $\bG$ if $Y$ is a \emph{Dowker space} (i.e. \cite[p.763]{MR776636} a normal space whose product with the unit interval is not normal; such exist \cite[Theorem 1.1]{zbMATH00936741}):
    \begin{itemize}[wide]
    \item one may as well restrict attention to an open subgroup of $\bG$, thus assuming the latter almost-connected (and still non-discrete);
      
    \item in which case the normality of $Y\times \bG$ would travel \cite[Theorem 1.5.20]{eng_top_1989} along closed (because proper) maps
      \begin{equation*}
        Y\times \bG
        \xrightarrow{\quad}
        Y\times \left(\bG/\bK\right)
        ,\quad
        \text{compact }\bK\trianglelefteq \bG;
      \end{equation*}

    \item so that the product of $Y$ with some infinite \emph{metrizable} $\bG/\bK$ would be normal;

    \item violating the Dowker assumption by \cite[Theorem 1.1, (iii) $\iff$ (iv)]{MR776636}.
    \end{itemize}
  \end{enumerate}  
\end{remarks}

The concepts introduced in \Cref{def:transv.2.supp} are amenable to the sort of passage to quotient groups visible in the proof of \Cref{th:free.prop.zd.cpct} (see also \cite[Proposition 1.3.2]{palais_slice-nc} and \cite[Lemma 2.3]{zbMATH03603988} for analogues of the following remark).

\begin{lemma}\label{le:pass2quot.prpr}
  Let $X\times \bG\xrightarrow{\alpha}X$ be a Bourbaki-proper action of a locally compact Hausdorff group on a Hausdorff topological space, $F\subseteq X$ a closed subset and $\bH\trianglelefteq \bG$ a closed normal subgroup. 

  If $\alpha$ is $F$-proper, then the induced $(\bG/\bH)$-action on $X/\bH$ is proper with respect to the image of $F$ through $X\xrightarrowdbl{}X/\bH$, and that image is closed.
\end{lemma}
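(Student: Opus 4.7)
The plan is to treat the two assertions separately. Let $\pi : X \twoheadrightarrow X/\bH$ and $\pi_\bG : \bG \twoheadrightarrow \bG/\bH$ denote the topological and group quotient maps; normality of $\bH$ in $\bG$ is already implicit in the fact that $\bar\alpha : X/\bH \times \bG/\bH \to X/\bH$, $[x]\cdot[g] := [xg]$, is a well-defined continuous action.

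\textbf{Step 1: $\pi(F)$ is closed.} It is enough to show $F\bH = \pi^{-1}(\pi(F))$ is closed in $X$. Take a convergent net $f_\lambda h_\lambda \to y$ in $X$ with $f_\lambda \in F$ and $h_\lambda \in \bH$; the goal is $y \in F\bH$. By $F$-properness of $\alpha$ applied at $y$, there are neighborhoods $V_y \ni y$ and $V_F \supset F$ with $\braket{V_F:V_y}_\alpha \subseteq \bG$ relatively compact. Eventually $f_\lambda h_\lambda \in V_y$, so $f_\lambda \in V_y h_\lambda^{-1} \cap V_F \ne \emptyset$ gives $h_\lambda^{-1} \in \braket{V_F:V_y}_\alpha$ for large $\lambda$. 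Pass to a subnet so that $h_\lambda^{-1} \to k$ in $\bG$; closedness of $\bH \le \bG$ forces $k \in \bH$. Then $f_\lambda = (f_\lambda h_\lambda)\, h_\lambda^{-1} \to yk$, and $F$ closed yields $yk \in F$, so $y = (yk)\,k^{-1} \in F\bH$.

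\textbf{Step 2: $\bar\alpha$ is $\pi(F)$-proper.} Fix $[x] \in X/\bH$ with a lift $x \in X$. By $F$-properness of $\alpha$, choose open neighborhoods $V_x \ni x$ and $V_F \supset F$ with $\braket{V_F:V_x}_\alpha$ relatively compact in $\bG$. Since $\pi$ is an open map (standard for quotients by continuous group actions), $U_{[x]} := \pi(V_x)$ and $U_{\pi(F)} := \pi(V_F)$ are open neighborhoods of $[x]$ and $\pi(F)$. The key identity to verify is
\begin{equation*}
  \braket{U_{\pi(F)} : U_{[x]}}_{\bar\alpha} \;=\; \pi_\bG\!\left(\braket{V_F:V_x}_\alpha\right).
\end{equation*}
Indeed, $[g]$ lies in the left-hand set iff there is $y \in V_x$ with $[yg] \in \pi(V_F)$, i.e.\ $yg \in V_F\bH$, i.e.\ $ygh^{-1} \in V_F$ for some $h \in \bH$, which says exactly $gh^{-1} \in \braket{V_F:V_x}_\alpha$, i.e.\ $g\bH \cap \braket{V_F:V_x}_\alpha \ne \emptyset$. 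Relative compactness of the right-hand side in $\bG/\bH$ follows from continuity of $\pi_\bG$ applied to a relatively compact subset of $\bG$.

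\textbf{Anticipated obstacle.} None of substance: normality is absorbed into the well-definedness of $\bar\alpha$, and once the key identity in Step 2 is written out both halves are bookkeeping. The only mild care is in Step 1, where one must pass to a subnet of $(h_\lambda^{-1})$ to extract convergence in the relatively compact set $\braket{V_F:V_y}_\alpha$ and then invoke closedness of $\bH$ to trap the limit inside $\bH$.
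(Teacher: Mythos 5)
Your proof is correct. The properness half (Step 2) is essentially identical to the paper's argument: the same identity $\braket{\pi(V_F):\pi(V_x)}_{\overline{\alpha}}=\pi_{\bG}\left(\braket{V_F:V_x}_{\alpha}\right)$, followed by the observation that the group quotient map preserves relative compactness. Where you diverge is the closedness half: the paper reduces to the case $\bH=\bG$ (the restricted $\bH$-action is still $F$-proper) and then outsources the closedness of $\pi(F)$ to Abels' Proposition 1.2(d), applied to the family $\left\{Fg\right\}_{g\in\bG}$ via the smallness of $F$; you instead prove directly that the saturation $F\bH$ is closed by a net argument, trapping the translating elements $h_\lambda^{-1}$ in the relatively compact set $\braket{V_F:V_y}_{\alpha}$ and using closedness of $\bH$ to keep the limit inside $\bH$. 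Your route is self-contained and more elementary, at the cost of a subnet extraction; it also only uses $F$-properness (not Bourbaki properness), which is consistent with the paper's subsequent remark that the closedness of $F\bG$ needs no Bourbaki properness (\Cref{cor:fg.closed}) -- indeed your Step 1 with $\bH:=\bG$ is precisely a proof of that corollary.
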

\begin{proof}
  \begin{enumerate}[(I),wide]
  \item \textbf{: closure.} Substituting back $\bG$ for $\bH$, the claim amounts to showing that whenever $\alpha$ is $F$-proper the image $\pi(F)$ through $X\xrightarrowdbl{\pi}X/\bG$ is closed. This follows from \cite[Proposition 1.2(d)]{zbMATH03603988} applied to the family $\left\{Fg\right\}_{g\in \bG}$, given the smallness of $F$ (\Cref{re:small}).

    
  \item \textbf{: properness.} For $x\in X$ once more choose neighborhoods $V_x\ni x$ and $V_F\supset F$ per \Cref{def:transv.2.supp} and observe that, writing $\overline{\alpha}$ for the $(\bG/\bH)$-action on $X/\bH$ induced by $\alpha$ and $\bG\xrightarrowdbl{\pi^{\bG}}\bG/\bH$,
    \begin{equation*}
      \braket{\pi(V_F):\pi(V_x)}_{\overline{\alpha}}
      =\pi^{\bG}\left(
        \braket{V_F:V_x}_{\alpha}
      \right).
    \end{equation*}
    The conclusion follows from the fact that $\pi^{\bG}$ sends relatively compact subsets to such. 
  \end{enumerate}
\end{proof}

In \Cref{le:pass2quot.prpr}, Bourbaki properness is only there to ensure good quotient behavior. The closure of the $X/\bG$-image of $F$ is equivalent to the closure of $F\bG$, and \emph{that} requires no Bourbaki properness: the argument already given in fact yields

\begin{corollary}\label{cor:fg.closed}
  Let $X\times \bG\xrightarrow{\alpha}X$ be an action of a locally compact Hausdorff group on a Hausdorff topological space, $F$-proper for closed $F\subseteq X$.

  The subset $F\bG\subseteq X$ is closed.  \qedhere
\end{corollary}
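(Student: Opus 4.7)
The plan is to adapt the closure argument from the proof of \Cref{le:pass2quot.prpr}, isolating the portion that depends only on $F$-properness. Since a subset of a topological space is closed precisely when its complement is open, I would fix an arbitrary $y\in X\setminus F\bG$ and produce an open neighborhood of $y$ disjoint from $F\bG$.

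The first step is to invoke $F$-properness at the point $y$: choose neighborhoods $V_y\ni y$ and $V_F\supset F$ with $V_F\perp V_y$, so that $\braket{V_F:V_y}_{\alpha}$ is relatively compact in $\bG$ with some compact closure $K$. Next comes the key bookkeeping: if $V_y\cap Fg\ne\emptyset$, pick $v=fg$ with $v\in V_y$ and $f\in F\subseteq V_F$; then $V_yg^{-1}\cap V_F\ni f$, so $g^{-1}\in \braket{V_F:V_y}_{\alpha}\subseteq K$, i.e.\ $g\in K^{-1}$. Contrapositively, for every $g\notin K^{-1}$ the translate $Fg$ is already disjoint from $V_y$.

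The final step is to upgrade this to a neighborhood separating $y$ from the \emph{entire} union $F\bG$. Here \Cref{re:cpct.clsd} applies: with $F$ closed and $K^{-1}\subseteq\bG$ compact, the set
\begin{equation*}
  FK^{-1}=\bigcup_{g\in K^{-1}}Fg\subseteq X
\end{equation*}
is closed, and by hypothesis avoids $y$. Hence $U:=V_y\cap (X\setminus FK^{-1})$ is an open neighborhood of $y$, and any $u\in U$ lying in some $Fg$ would by the previous paragraph force $g\in K^{-1}$ and hence $u\in FK^{-1}$, contradicting $u\in U$. Thus $U\cap F\bG=\emptyset$, proving $y\notin\overline{F\bG}$.

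I do not anticipate a genuine obstacle: this is essentially the closure half of \Cref{le:pass2quot.prpr} rerun with the normal subgroup taken to be all of $\bG$, and the only subtlety is the routine inversion that converts "$Fg$ meets $V_y$" into membership of $g^{-1}$ in $\braket{V_F:V_y}_{\alpha}$. In particular, Bourbaki properness of $\alpha$ is not invoked: it was needed in \Cref{le:pass2quot.prpr} only to make the passage to quotients well-behaved, whereas the present statement lives entirely upstairs in $X$.
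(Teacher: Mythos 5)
Your argument is correct and is essentially the paper's: the closure claim is handled in \Cref{le:pass2quot.prpr} by citing \cite[Proposition 1.2(d)]{zbMATH03603988} for the family $\left\{Fg\right\}_{g\in \bG}$ via the smallness of $F$ (\Cref{re:small}), and your direct proof --- a compact $K$ controlling which translates $Fg$ can meet $V_y$, followed by the closedness of $FK^{-1}$ from \Cref{re:cpct.clsd} --- is precisely what that citation unwinds to. You are also right that Bourbaki properness of $\alpha$ plays no role here; the paper makes the same observation immediately before stating \Cref{cor:fg.closed}.
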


This is entirely analogous to the closure of the orbits \cite[\S III.4.2, Proposition 4(d)]{bourb_top_en_1} of a Bourbaki-proper action. As a consequence, a closed paracompact $F\subseteq X$ sweeps out a closed paracompact set under an $F$-proper action. 


\begin{corollary}\label{cor:sweep.paraco}
  Let $X\times \bG\xrightarrow{\alpha}X$ be an action of a locally compact Hausdorff group on a Hausdorff topological space.

  If $\alpha$ is $F$-proper for closed paracompact $F\subseteq X$, the closed set $F\bG\subseteq X$ is paracompact.  
\end{corollary}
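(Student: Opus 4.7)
The plan is to mimic the argument sketched in \Cref{re:czd.parac} for compact $K$, replacing compactness by $F$-properness on paracompact $F$. The restricted action map $\alpha|_{F\times \bG}:F\times \bG\to X$ has image $F\bG$, which is closed in $X$ by \Cref{cor:fg.closed}. Once I verify that this restricted map is \emph{closed}, the corestriction $F\times \bG\rightarrowdbl F\bG$ is a closed continuous surjection from a paracompact space onto a Hausdorff space, so \cite[Theorem 5.1.33]{eng_top_1989} yields paracompactness of $F\bG$. The domain $F\times \bG$ is paracompact because $F$ is so by hypothesis, $\bG$ is locally compact paracompact by \cite[\S III.4.6, Proposition 13]{bourb_top_en_1}, and the product of a paracompact space with a locally compact paracompact one is paracompact by \cite[Proposition 1.11]{zbMATH03603988} (both facts already invoked in the proof of \Cref{le:tw.prod.props}).

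The substance lies in establishing closedness of $\alpha|_{F\times \bG}$, which I would argue by nets. Let $C\subseteq F\times \bG$ be closed and $z\in X$ a limit point of $\alpha(C)$, realized as $z=\lim_\lambda y_\lambda g_\lambda$ for some net $(y_\lambda,g_\lambda)\in C$. The $F$-properness hypothesis (\Cref{def:transv.2.supp}\Cref{item:def:transv.2.supp:f}), applied at the point $z$, furnishes neighborhoods $V_z\ni z$ and $V_F\supset F$ with $V_F\perp V_z$, i.e.\ with $\braket{V_F:V_z}_\alpha$, and hence also its inverse $\braket{V_z:V_F}_\alpha$, relatively compact in $\bG$. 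Eventually $y_\lambda g_\lambda\in V_z$ while $y_\lambda\in F\subseteq V_F$ throughout, so $V_Fg_\lambda\cap V_z\ne\emptyset$ and thus $g_\lambda\in \braket{V_z:V_F}_\alpha$ eventually. Passing to a subnet, $g_\lambda\to g\in \bG$; then $y_\lambda=(y_\lambda g_\lambda)g_\lambda^{-1}\to zg^{-1}$, and closedness of $F$ places $zg^{-1}\in F$. Closedness of $C$ in $F\times \bG$ then yields $(zg^{-1},g)\in C$, whence $z=(zg^{-1})g\in \alpha(C)$, as required.

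The only mildly delicate point is the net extraction, but it is exactly the standard device by which Bourbaki properness gives closedness of action maps on compact slices (the mechanism behind \Cref{re:czd.parac}): the $F$-properness assumption is precisely what permits the same extraction to succeed on the larger, possibly non-compact, paracompact $F$.
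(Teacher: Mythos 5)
Your proof is correct and follows essentially the same route as the paper's: both arguments reduce the claim to the paracompactness of $F\times \bG$ (via the identical product-of-paracompact-and-locally-compact-paracompact citations) together with the closedness of the restricted action map onto the Hausdorff image $F\bG$, followed by preservation of paracompactness under closed continuous surjections. The only divergence is that the paper obtains closedness (indeed perfectness) of $F\times\bG\to F\bG$ by citing \cite[Proposition 1.4 (a) $\Rightarrow$ (c)]{zbMATH03603988}, whereas you prove it directly with a net-extraction argument from $F$-properness (using $\braket{V_z:V_F}_{\alpha}=\braket{V_F:V_z}_{\alpha}^{-1}$), which is a correct and self-contained substitute.
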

\begin{proof}
  Closure is already covered by \Cref{cor:fg.closed}. As for paracompactness, note first that $F\times \bG\to F\bG$ is a \emph{perfect map} in the sense of \cite[Problem 20G]{wil_top}: continuous closed surjection with compact point preimages (properness and hence closure follows from \cite[Proposition 1.4 (a) $\Rightarrow$ (c)]{zbMATH03603988}; the assumed Palais properness of that statement does not play a role in proving the implication referenced here).

  The paracompactness of $F\bG$ is thus equivalent by \cite[Problem 20G]{wil_top} to that of $F\times \bG$. The latter is paracompact by \cite[Proposition 1.11]{zbMATH03603988} or \cite[Problem 5.5.5(c)]{eng_dim}, being the product of a paracompact space $F$ and a locally compact paracompact (\cite[\S III.4.6, Proposition 13]{bourb_top_en_1}, \cite[\S 1, first paragraph]{zbMATH05182649}) space $\bG$.  \qedhere

\end{proof}

We observe next that paracompactness also behaves well in passing to quotients by $\Phi$-proper actions.

\begin{lemma}\label{le:still.paraco}
  Let $X\times \bG\xrightarrow{\alpha}X$ be a Bourbaki-proper action of a locally compact Hausdorff group on a Hausdorff topological space, $F$-proper for closed $F\subseteq X$.

  If $F$ is paracompact, so is its image through $X\xrightarrowdbl{\pi}X/\bG$.
\end{lemma}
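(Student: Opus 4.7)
The plan is to exhibit $\pi|_F : F \to \pi(F)$ as a closed continuous surjection onto a Hausdorff space, so that paracompactness of $F$ transports to $\pi(F)$ via the permanence result \cite[Theorem 5.1.33]{eng_top_1989} already deployed in \Cref{cor:sweep.paraco}.

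First I would dispose of the cheap ingredients. Continuity and surjectivity of $\pi|_F$ are tautological, and $\pi(F)$ is Hausdorff as a subspace of $X/\bG$, which is itself Hausdorff by Bourbaki properness of $\alpha$. (This is in fact the only use of the Bourbaki hypothesis in the argument.)

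The substantive step is closedness of $\pi|_F$. The key observation, which I would isolate explicitly, is that $F$-properness is \emph{monotone in the closed set}: if $C \subseteq F$ is closed and $V_F$ is a small neighborhood of $F$ in the sense of \Cref{def:transv.2.supp}, then $V_F$ is automatically a (small) neighborhood of $C$, because the thinness condition $V_F \perp V_x$ depends only on $V_F$. Consequently $\alpha$ is $C$-proper, and \Cref{cor:fg.closed} yields that $C\bG$ is closed in $X$. Since $C\bG$ is $\bG$-invariant, $\pi(C\bG) = \pi(C) \subseteq X/\bG$ is closed, hence a fortiori closed in the subspace $\pi(F)$, establishing the closedness of $\pi|_F$.

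I do not foresee any real obstacle. The only point requiring care is the monotonicity remark, which is not spelled out in \Cref{def:transv.2.supp} but falls out by direct inspection. An alternative route via \Cref{cor:sweep.paraco} — taking the paracompact set $F\bG$ as the domain and trying to show $F\bG \to \pi(F)$ is closed — looks less appealing, since arbitrary closed subsets of $F\bG$ are not saturations of closed subsets of $F$, so the monotonicity step does not apply verbatim.
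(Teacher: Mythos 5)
Your proposal is correct and matches the paper's proof essentially verbatim: the paper likewise (co)restricts $\pi$ to $F\xrightarrowdbl{}\pi(F)$, observes via \Cref{cor:fg.closed} that $F'\bG$ is closed for every closed $F'\subseteq F$ (your ``monotonicity'' of $F$-properness in the closed set), and invokes the permanence of paracompactness under closed continuous surjections between Hausdorff spaces. The only cosmetic difference is the citation for that permanence result (the paper uses \cite[Theorem 20.12(b)]{wil_top} here).
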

\begin{proof}
  This follows from the fact \cite[Theorem 20.12(b)]{wil_top} that paracompactness travels along continuous closed surjections between Hausdorff spaces: the (co)restriction
  \begin{equation*}
    F
    \xrightarrowdbl{\quad\pi|_F\quad}
    \pi(F)
  \end{equation*}
  is closed (and has closed image $\pi(F)\subseteq X/\bG$) by \Cref{cor:fg.closed}, which implies that $F'\bG\subseteq X$ is closed for every closed $F'\subseteq F$.
\end{proof}

The following bit of notation will be useful in generalizing \Cref{th:free.prop.zd.cpct} beyond compact supports.

\begin{definition}\label{def:push.supp}
  For a continuous map $X\xrightarrow{f}Y$ and a family of supports $\Phi\subseteq 2^X$ the \emph{push-forward} $f_*\Phi\subseteq 2^Y$ is the family of supports generated by
  \begin{equation*}
    \left\{\overline{f(F)}\ :\ F\in \Phi\right\}.
  \end{equation*}
  We will mostly be concerned with push-forwards in conditions when $f(F)$ are automatically closed. 
\end{definition}

\begin{examples}\label{exs:push}
  \begin{enumerate}[(1),wide]
  \item\label{item:exs:push:prop} If $f$ is a proper surjection of Hausdorff spaces preimages of compact sets are compact \cite[\S I.10.2, Proposition 6]{bourb_top_en_1}, and hence $f_*\mc=\mc$.

  \item\label{item:exs:push:open} Similarly, $f_* \mc=\mc$ if $X\xrightarrow{f}Y$ is an \emph{open} surjection between locally compact Hausdorff spaces (e.g. quotient maps $X\xrightarrowdbl{}X/\bG$ for Bourbaki-proper actions on locally compact spaces \cite[\S III.4.5, Proposition 11]{bourb_top_en_1}).

    Indeed, on the one hand images of compact sets are compact in any case, while openness affords going back: cover a compact $K\subseteq Y$ with finitely many interiors of compact sets of the form
    \begin{equation*}
      f(Q)
      ,\quad
      Q\text{ an open neighborhood of some }x\in f^{-1}(K).
    \end{equation*}

  \item\label{item:exs:push:comp} As expected, if $X\xrightarrow{f}Y\xrightarrow{g}Z$ are composable maps and $\Phi\subseteq 2^X$ is support family, then
    \begin{equation*}
      g_*\left(f_*\Phi\right)
      =
      (g\circ f)_*\Phi.
    \end{equation*}
    Simply observe that a closed subset of $Z$ contains one of the sets 
    \begin{equation*}
      g(f(F))
      \subseteq
      g\left(\overline{f(F)}\right)
      \subseteq
      \overline{g\left(\overline{f(F)}\right)}
      =
      \overline{g\left(f(F)\right)}
    \end{equation*}
    if and only if it contains the others.
  \end{enumerate}
\end{examples}

In conjunction with \Cref{def:push.supp}, we employ

\begin{notation}\label{not:push.supp}
  For an appropriately well-behaved quotient $X\xrightarrow{\pi}X/\bG$ and a support family $\Phi\subset 2^X$ we occasionally write $\Phi/\bG:=\pi_*\Phi$.
\end{notation}

The preceding material builds up to the following natural extension of \Cref{th:free.prop.zd.cpct}. 

\begin{theorem}\label{th:free.prop.zd.gen}
  Consider a Bourbaki-proper action $X\times \bZ^d\to X$ on a Hausdorff space, $\Phi$-proper in the sense of \Cref{def:transv.2.supp}\Cref{item:def:transv.2.supp:phi} for a $\bZ^d$-invariant paracompactifying family of supports $\Phi\subseteq 2^X$.
  
  If $\cF\in \cA b_X^{\bZ^d}$ is $\Supp\left(\Phi \bZ^d\right)$-soft for
  \begin{equation*}
    \Phi\bZ^d
    :=
    \left\{F\bZ^d\ :\ F\in \Phi \right\}
  \end{equation*}
  then 
  \begin{equation*}
    H^p\left(\bZ^d,\ \Gamma_{\Phi}(\cF)\right)
    \cong
    \begin{cases}
      \Gamma_{\pi_*\Phi}\left(X/\bZ^d,\ \cF/\bZ^d\right)
      &\text{if }p=d\\
      0
      &\text{otherwise},
    \end{cases}    
  \end{equation*}
  where $X\xrightarrowdbl{\pi}X/\bZ^d$ is the quotient map and $\pi_*\Phi$ is the push-forward of \Cref{def:push.supp}. 
\end{theorem}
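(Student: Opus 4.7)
The plan is to induct on $d$ along the lines of the proof of \Cref{th:free.prop.zd.cpct}, substituting $\Phi$ for $\mc$ throughout, with $d=0$ vacuous and $d=1$ handled as a separate base computation that will be reused in every induction step. In the step $d\ge 2$ the Lyndon-Hochschild-Serre spectral sequence
\[
E_2^{p,q}:=H^p\left(\bZ,\ H^q\left(\bZ^{d-1},\ \Gamma_{\Phi}(\cF)\right)\right)\xRightarrow{\quad}H^{p+q}\left(\bZ^d,\ \Gamma_{\Phi}(\cF)\right)
\]
collapses under the induction hypothesis for $\bZ^{d-1}$ acting on $X$ (the restricted action is again Bourbaki- and $\Phi$-proper since $\braket{A:B}_{\alpha^{\bZ^{d-1}}}\subseteq \braket{A:B}_{\alpha}$, and $\cF$ remains $\Supp(\Phi\bZ^{d-1})$-soft as a consequence of $\Supp(\Phi\bZ^{d-1})\subseteq \Supp(\Phi\bZ^d)$), leaving only $E_2^{1,d-1}\cong H^1(\bZ,\ \Gamma_{\pi^{d-1}_*\Phi}(X/\bZ^{d-1},\ \cF/\bZ^{d-1}))$ to be identified via the $d=1$ case.

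The $d=1$ computation itself is a verbatim reprise of its counterpart in the proof of \Cref{th:free.prop.zd.cpct}, with $\mc$ replaced by $\Phi$: the cocycle identification \Cref{eq:1cocyca} and the periodization formula \Cref{eq:periodization} are unchanged, with $\Phi$-properness of $\alpha$ supplying the pointwise local finiteness of the defining sum and the support containment $\supp S\subseteq \pi(\supp s)\in \pi_*\Phi$ for free. For surjectivity, given $s'$ with $\bZ$-invariant lift supported in $F\bZ$ for $F\in \Phi$, I select a closed $\Phi$-neighborhood $F'\supset F$ (available by the paracompactifying hypothesis), invoke $\Phi$-properness of $F'$ to pick $M$ disjoining $F'\bZ_{\le -M}$ from $F'\bZ_{\ge M}$, define the same three-case $\widetilde s$ on $F'\bZ_{\le -M}\cup F'\bZ_{\ge M}\cup\partial F'\bZ$, and extend it globally by the $\Supp(\Phi\bZ)$-softness of $\cF$. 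The resulting $s:=1\triangleright\widetilde s-\widetilde s$ is then supported in the slab $\bigcup_{n=-M}^M F'\cdot n$, an element of $\Phi$ by $\bZ$-invariance and finite-union closure, and periodizes back to $s'$.

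To feed this $d=1$ conclusion into the induction step, the triple $(Y,\cG,\Psi):=(X/\bZ^{d-1},\ \cF/\bZ^{d-1},\ \pi^{d-1}_*\Phi)$ must satisfy the theorem's $d=1$ hypotheses: Bourbaki- and $\Psi$-properness of the induced $\bZ\cong \bZ^d/\bZ^{d-1}$-action follow from \Cref{le:pass2quot.prpr}; $\bZ$-invariance of $\Psi$ is automatic from $\bZ^d$-invariance of $\Phi$; and the paracompactifying property rests on \Cref{le:still.paraco} (for the generators $\pi^{d-1}(F)$) together with \Cref{cor:sweep.paraco} (for their $\bZ$-sweeps $\pi^{d-1}(F\bZ^d)$), with the requisite closed $\Psi$-neighborhoods descended from closed $\Phi$-neighborhoods of $F$. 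The substantive obstacle is that $\cG$ must be $\Supp(\Psi\bZ)$-soft on $Y$, whereas the assumption delivers only the $\Supp(\Phi\bZ^d)$-softness of $\cF$ on $X$. Because a Bourbaki-proper action of the torsion-free discrete group $\bZ^{d-1}$ is automatically free and properly discontinuous (\Cref{res:prop.act}\Cref{item:res:prop.act:princ.g.bun}), $X\to Y$ is a principal bundle and the category equivalence $\cA b_X^{\bZ^{d-1}}\simeq \cA b_Y$ recasts $\Supp(\Psi\bZ)$-softness of $\cG$ as $\bZ^{d-1}$-equivariant softness of $\cF$ over closed subsets of members of $\Supp(\Phi\bZ^d)$. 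I would clear this obstacle via a slice-and-propagate construction: build an equivariant extension first over a fundamental-domain-like closed patch by ordinary softness of $\cF$, then carry it across $X$ by $\bZ^{d-1}$-translation, and glue along a locally finite refinement whose existence is underwritten by the paracompactness supplied through \Cref{le:still.paraco}.
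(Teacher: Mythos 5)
Your overall architecture---induction on $d$ via the Lyndon--Hochschild--Serre spectral sequence, descent of the properness and paracompactifying hypotheses through \Cref{le:pass2quot.prpr}, \Cref{le:still.paraco} and \Cref{cor:sweep.paraco}, and a $d=1$ base case run by the periodization map \Cref{eq:periodization}---is exactly the paper's, and your first two paragraphs track its proof faithfully; the explicit observation that $\Supp(\Phi\bZ^{d-1})\subseteq\Supp(\Phi\bZ^{d})$ handles softness for the restricted $\bZ^{d-1}$-action is correct and worth making.

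The genuine gap is in your final paragraph. You are right to single out the $\Supp(\Psi\bZ)$-softness of $\cG=\cF/\bZ^{d-1}$ as the substantive descending hypothesis (the paper subsumes it under ``the hypotheses thus descend,'' while its own \Cref{res:soft.not.desc} warns that softness of $\cF$ does not formally imply softness of the descent). But the mechanism you propose---extend over a fundamental-domain-like patch, translate by $\bZ^{d-1}$, and ``glue along a locally finite refinement''---does not work for a general abelian $\cF$: the translated local extensions will not agree on overlaps, and patching them requires an invariant partition of unity, precisely the device that \Cref{res:soft.not.desc}\Cref{item:res:soft.not.desc:bo} and \Cref{ex:pos.char} show is unavailable for sheaves that are not modules over a soft sheaf of rings. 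What one should glue is the \emph{property}, not the sections: the Bourbaki-proper action of the torsion-free discrete $\bZ^{d-1}$ has finite, hence trivial, isotropy, so it is free and $X\to X/\bZ^{d-1}$ is a locally trivial principal bundle (\Cref{res:prop.act}\Cref{item:res:prop.act:princ.g.bun}); over a trivializing neighborhood $\cG$ is isomorphic to the restriction of $\cF$ to a slice, and since softness with respect to a paracompactifying family is a local property (cf.\ \cite[\S II.9]{bred_shf_2e_1997}), the $\Supp(\Phi\bZ^d)$-softness of $\cF$ localizes and reassembles into $\Supp(\Psi\bZ)$-softness of $\cG$ with no section-gluing at all. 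A smaller caveat on your $d=1$ surjectivity step: $F'$-properness controls $\braket{V_{F'}:V_x}$ for individual points $x$, not $\braket{F':F'}$ itself, so the existence of an $M$ separating $F'\bZ_{\le -M}$ from $F'\bZ_{\ge M}$ does not follow ``for free'' as you assert---the compact-support proof gets this from $K'\perp K'$, which is automatic for compact $K'$ but requires justification for general members of a paracompactifying $\Phi$.
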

\begin{proof}
  The proof of the earlier \Cref{th:free.prop.zd.cpct} adapts fairly readily, given the various auxiliary results. The argument is again inductive on $d$, with the inductive step easily dispatched. Write
  \begin{equation*}
    X/\bZ^{k}
    \xrightarrowdbl{\quad\pi_{k,\ell}\quad}
    X/\bZ^{\ell}
    ,\quad
    0\le k\le \ell\le d
  \end{equation*}
  for the intermediate quotient maps. \Cref{exs:push}\Cref{item:exs:push:comp} ensures that these are all compatible with the push-forwards of $\Phi$, in the sense that 
  \begin{equation*}
    \pi_{0,\ell*}\Phi
    =:
    \Phi_{\ell}
    =
    \pi_{k,\ell*}\Phi_{k}
    ,\quad
    \forall 0\le k\le \ell\le d.
  \end{equation*}
  Moreover, the images $\pi_{k,\ell}(F)$, $F\in \Phi_k$ are closed by \Cref{le:pass2quot.prpr}, and the same result transports the properness hypotheses over to the action of $\bZ^{d-\ell}$ of $X/\bZ^{\ell}$, $0\le \ell\le d$. Finally, all $\Phi_{\ell}$ are again paracompactifying by \Cref{le:still.paraco}. The hypotheses thus descend along the projections $\pi_{k,\ell}$, so the claim does indeed reduce to $d=1$ as in the earlier proof.

  Assuming henceforth that $d=1$, we once more retrace the steps of the earlier proof, with modifications where appropriate. The only potentially non-vanishing cohomology is $H^1$ for, as before,
  \begin{itemize}
  \item higher cohomology groups because $cd(\bZ)=1$;

  \item and the properness of the action ensures that a $\bZ$-invariant $\Phi$-supported section vanishes by \Cref{exs:trnsvrs}\Cref{item:exs:trnsvrs:inv.prop}.
  \end{itemize}
  This reduces the problem to showing that
  \begin{equation}\label{eq:th:free.prop.zd.gen:h1}
    H^1\left(\bZ,\ \Gamma_{\Phi}(\cF)\right)
    \cong
    \Gamma_{\Phi/\bZ}\left(X/\bZ,\ \cF/\bZ\right),
  \end{equation}
  whence we can again proceed as before: the analogue 
  \begin{equation*}
    \Gamma_{\Phi}(\cF)
    \in
    s
    \xmapsto{\quad}      
    \begin{aligned}
      S
      &:=
        \left(
        X\ni x
        \xmapsto{\quad}
        \lim_{t\to\infty}
        \sum_{k\ge 1}
        (t-k)\triangleright s
        \right)
      \\
      &=
        \lim_{t\to\infty}
        \sum_{k\ge 1}
        s\left(\bullet\triangleleft(t-k)\right)\triangleleft(k-t)\\        
      &\in
        \Gamma_{\Phi/\bZ}\left(X/\bZ,\ \cF/\bZ\right)
    \end{aligned}
  \end{equation*}
  of \Cref{eq:periodization} induces a morphism
  \begin{equation*}
    H^1\left(\bZ,\Gamma_{\Phi}(\cF)\right)
    \cong
    \Gamma_{\Phi}(\cF)/\im\left(1\triangleright -\id\right)|_{\Gamma_{\Phi}(\cF)}
    \xrightarrow{\quad}
    \Gamma_{\Phi/\bZ}\left(X/\bZ,\ \cF/\bZ\right).
  \end{equation*}
  Neither injectivity nor surjectivity are substantially harder to check than in \Cref{th:free.prop.zd.cpct}. Focusing on the latter, simply observe that the argument employed previously goes through given that under the hypotheses ($\Phi$ is paracompactifying and the action is $\Phi$-proper) the family $\Supp\left(\Phi\bZ^d\right)$ is again paracompactifying by \Cref{cor:sweep.paraco}. 
\end{proof}

\begin{remarks}\label{res:soft.not.desc}
  \begin{enumerate}[(1),wide]
  \item For a support family $\Phi$ invariant under a $\bG$-action, $\Supp(\Phi\bG)$-softness is one version of what one might mean by a sheaf being ``equivariantly soft''. Another would be the requirement that the descent $\cF/\bG$ be $\Phi/\bG$-soft; some care is in order however: the latter condition requires the global extensibility of \emph{$\bG$-invariant} sections from $F\bG$, $F\in \Phi$ again $\bG$-invariantly, and does not follow from the former (at least for Bourbaki properness).

    This is already visible in Bebutov's example recalled in \Cref{res:prop.act}\Cref{item:res:prop.act:bbt}, modified by passing from $\bR$ to the cocompact $\bZ\le \bR$, as in the proof of \Cref{pr:bbt.ext}, \Cref{item:pr:bbt.ext:bp.free.prc} $\Rightarrow$ \Cref{item:pr:bbt.ext:zg}. Take for $\Phi$ the family $\mathrm{cl}$ of all closed sets, and for $\cF$ the sheaf of continuous $\bR$-valued germs. This is of course a soft (i.e. $\mathrm{cl}$-soft) sheaf on the metric space $X\subseteq \bR^2$ acted upon by $\bZ$ (by Tietze \cite[Theorem 15.8]{wil_top}, metric spaces being normal \cite[Example 5.3(c)]{wil_top}). Because $X/\bZ$ is not completely regular, continuous functions generally do not extend globally from closed subsets.

  \item The sheaf $\cF/\bZ$ of continuous germs on the preceding observation's space $X/\bZ$, despite being non-soft, is nevertheless \emph{acyclic} (i.e. \cite[\S II.4]{bred_shf_2e_1997} has vanishing cohomology in degrees 1 and higher):
    \begin{itemize}[wide]
    \item The $\bZ$-action on $X$ being free, we have a spectral sequence \cite[Proposition 5.2.3]{zbMATH03192990}
      \begin{equation*}
        E_2^{p,q}
        :=
        H^p(\bZ,\ H^q(X,\cF))
        \xRightarrow{\quad}
        H^{p+q}(X/\bZ,\cF/\bZ).
      \end{equation*}
    \item Continuous-function softness over $X$ then collapses this to
      \begin{equation}\label{eq:hpz.hpf}
        H^p\left(\bZ,\ \cat{Cont}(X\to \bR)\right)
        \xRightarrow{\quad}
        H^{p}(X/\bZ,\cF/\bZ).
      \end{equation}
    \item And finally, \cite[Lemma 2.4]{zbMATH00825443} argues that the left-hand group cohomology in \Cref{eq:hpz.hpf} vanishes for $p\ge 1$.
    \end{itemize}
    
  \item\label{item:res:soft.not.desc:bo} The aforementioned acyclicity result \cite[Lemma 2.4]{zbMATH00825443} refers specifically to a proper action on
    \begin{equation*}
      \bL/\bK
      ,\quad
      \bL\text{ semisimple Lie}
      ,\quad
      \bK\le \bL\text{ maximal compact}
    \end{equation*}
    of a discrete subgroup $\bG\le \bL$. The proof, however, effectively shows that $H^p(\bG,\ \Gamma(\cF))$ vanish for $p\ge 1$ whenever the discrete group $\bG$ acts properly on a metrizable, locally compact, $\sigma$-compact space $X$ and $\cF\in \cA b_X^{\bG}$ is a sheaf of modules over real-valued continuous germs (the original setup is ever so slightly different: there is a smooth structure on the space and the sheaf is one of modules over smooth germs, etc.).
    
    The proof makes essential use, in the stated framework, of the existence of a $\bG$-invariant \emph{partition of unity} \cite[Problem 20C]{wil_top}: functions $X\xrightarrow{\varphi_{g}}\bR$, $g\in \bG$ whose supports constitute a locally finite family, with
    \begin{equation*}
      \sum_{g\in \bG}\varphi_g\equiv 1
      \quad\text{and}\quad
      g\triangleright \varphi_{g'} = \varphi_{g'}(\bullet\triangleleft g)=\varphi_{gg'}. 
    \end{equation*}
    That such a gadget exists is not difficult to see:
    \begin{itemize}[wide]
    \item Select open $U\subseteq X$ with $\left\{Ug\right\}_{g}$ locally finite (as the proof of \cite[Lemma 2.4]{zbMATH00825443} indicates).

    \item Shrink that to an open $V$ with $\overline{V}\subseteq U$ so that $\left\{Vg\right\}_{g\in \bG}$ is again a cover. This point and the preceding one do go through: the assumptions on the action (properness plus metrizability and local and $\sigma$-compactness for $X$) ensure \cite[Theorem 1.1]{zbMATH05904318} that the quotient $X/\bG$ is paracompact, whence \cite[\S 1.4, Lemma 2]{ksz_trnsf} the existence of a closed subset $F\subseteq X$ with
      \begin{equation*}
        F\bG=X
        ,\quad
        \exists\text{ small \cite[Definition 1.2.1]{palais_slice-nc} neighborhood }
        U\supset F
      \end{equation*}
      One can then take for $V$ a neighborhood of $F$ whose closure is contained in $U$ (possible by local and $\sigma$-compactness, hence paracompactness, hence normality). 

    \item Select a $[0,1]$-valued continuous $\widetilde{\varphi}_1$ supported on $U$ with $\widetilde{\varphi}_1|_V\equiv 1$.

    \item Define $\widetilde{\varphi}_g:=g\triangleright \widetilde{\varphi}_{1}$.
      
    \item Finally, set
      \begin{equation}\label{eq:div.by.sum}
        \varphi_g
        :=
        \frac {\widetilde{\varphi}_g}{\sum_{g'\in \bG}\widetilde{\varphi}_{g'}}
        \quad
        \left(\text{locally finite denominator, positive everywhere}\right).
      \end{equation}
    \end{itemize}
  \end{enumerate}  
\end{remarks}

The division by a positive sum in \Cref{eq:div.by.sum} makes crucial use of the order structure of $\bR$: while partitions of unity make sense (and exist \cite[Exercise II.13]{bred_shf_2e_1997}) for arbitrary soft sheaves of rings, \emph{invariant} partitions of unity as discussed in \Cref{res:soft.not.desc}\Cref{item:res:soft.not.desc:bo} above do not exist in full generality:

\begin{example}\label{ex:pos.char}
  Consider the case of a finite group $\bG$ acting on the singleton $X:=\{*\}$ and trivially on an algebra $\cF$ (regarded here as a sheaf on $X$) over a field of characteristic $p$ dividing $|\bG|$. 
\end{example}

One feature distinguishing the simple \Cref{ex:pos.char} from the more pleasant setup of \cite[Lemma 2.4]{zbMATH00825443} and \Cref{res:soft.not.desc}\Cref{item:res:soft.not.desc:bo} is the cohomology of the isotropy groups $\bG_x$, $x\in X$ (i.e. $\bG$ itself when all actions in sight are trivial, as in \Cref{ex:pos.char}): if $p\ |\ |\bG|$ then infinitely many cohomology groups
\begin{equation*}
  H^i(\bG,\bF_p)
  ,\quad
  i\in \bZ_{>0}
  ,\quad
  \bF_p:=\text{field with $p$-elements, $\bG$-acted upon trivially}
\end{equation*}
are non-vanishing \cite[Proposition II.7.5]{am_coh-gp}.

The extension of \cite[Lemma 2.4]{zbMATH00825443} in \Cref{pr:bo.gen} confirms that (under suitable conditions which certainly obtain in that result) this cohomological obstruction is the only hurdle precluding the type of acyclicity noted there. Incidentally, the same result also offers an instructive contrast to \Cref{th:free.prop.zd.gen}, where smaller supports preclude cohomology vanishing.

\begin{proposition}\label{pr:bo.gen}
  Let $\Phi\subseteq 2^X$ be a paracompactifying family of supports on a Hausdorff space $X$, $\bG$-invariant under a Bourbaki-proper $\Phi$-proper action $X\times \bG\to X$ by a discrete group.

  If $\cF\in \cA b_X^{\bG}$ is $\Supp(\Phi\bG)$-soft, $\cF/\bG$ is $\Phi/\bG$-soft and
  \begin{equation}\label{eq:isotr.gp.coh.0}
    H^i(\bG_x,\ \cF_x)=0
    ,\quad
    \forall i>0
    ,\quad
    \forall x\in X
  \end{equation}
  then $H^i(\bG,\ \Gamma_{\Supp(\Phi\bG)}(\cF))$ vanish for $i>0$.
\end{proposition}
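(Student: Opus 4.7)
The plan is to prove vanishing via two Grothendieck composition-of-functors spectral sequences, both converging to the derived functors of the single composite
\[
F\colon\cA b_X^{\bG}\xrightarrow{\quad}\cat{Ab}
,\quad
\cF\xmapsto{\quad}\Gamma_{\Supp(\Phi\bG)}(\cF)^{\bG}
\cong
\Gamma_{\Phi/\bG}\bigl(X/\bG,\ \cF/\bG\bigr),
\]
the indicated isomorphism stemming directly from the definitions of $\cF/\bG=(\pi_*\cF)^{\bG}$ and of the push-forward family $\Phi/\bG$ (\Cref{not:push.supp}): a $\bG$-invariant section of $\cF$ supported in some $F\bG$, $F\in\Phi$, descends to a section of $\cF/\bG$ over $X/\bG$ supported in the closed set $\pi(F)\in\Phi/\bG$, and vice-versa.

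\textbf{First factorization.} Writing $F=(-)^{\bG}\circ\Gamma_{\Supp(\Phi\bG)}$, the Grothendieck spectral sequence reads
\[
E_2^{p,q}=H^p\bigl(\bG,\ H^q_{\Supp(\Phi\bG)}(X,\cF)\bigr)\ \Longrightarrow\ R^{p+q}F(\cF),
\]
provided $\Gamma_{\Supp(\Phi\bG)}$ sends equivariant injectives to $\bG$-acyclic modules. This is standard: injectives in $\cA b_X^{\bG}$ are summands of coinduced sheaves $\prod_{g\in\bG}g^*\cJ$ for injective $\cJ\in\cA b_X$ (via the right adjoint to the exact forgetful functor), whose sections are coinduced $\bG$-modules and thus cohomologically trivial. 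By \Cref{cor:sweep.paraco} the family $\Supp(\Phi\bG)$ is paracompactifying, so the $\Supp(\Phi\bG)$-softness hypothesis on $\cF$ forces $H^q_{\Supp(\Phi\bG)}(X,\cF)=0$ for $q>0$. The sequence collapses onto the $q=0$ row, yielding
\[
H^p\bigl(\bG,\ \Gamma_{\Supp(\Phi\bG)}(\cF)\bigr)\cong R^pF(\cF).
\]

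\textbf{Second factorization.} Now write $F=\Gamma_{\Phi/\bG}\circ(-/\bG)$ and let $\cR^q:=R^q(-/\bG)$ denote the $q$-th derived functor of the invariant push-forward $\cA b_X^{\bG}\to\cA b_{X/\bG}$. The corresponding spectral sequence is
\[
E_2^{p,q}=H^p_{\Phi/\bG}\bigl(X/\bG,\ \cR^q\cF\bigr)\ \Longrightarrow\ R^{p+q}F(\cF).
\]
By \Cref{le:still.paraco} the push-forward $\Phi/\bG$ is paracompactifying, so the $\Phi/\bG$-softness hypothesis on $\cF/\bG$ kills the $q=0$ column for $p>0$. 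To finish it therefore suffices to show $\cR^q\cF=0$ for all $q>0$, which is the main technical point below.

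\textbf{Main obstacle: stalks of $\cR^q$.} Bourbaki properness for a discrete group action on a Hausdorff space forces each isotropy $\bG_x$ to be finite and produces a slice: a $\bG_x$-invariant open neighborhood $U\ni x$ with $Ug\cap U=\emptyset$ for every $g\in\bG\setminus\bG_x$ (first shrink a neighborhood whose relative thinness set $\braket{V:V}$ is finite so that orbit-separating neighborhoods exist, then intersect over the finite $\bG_x$-orbit to make the result $\bG_x$-invariant). On the open $\pi(U)\subseteq X/\bG$ the preimage $\pi^{-1}(\pi(U))=U\bG$ is a disjoint union $\bigsqcup_{g\bG_x}Ug$, so $(-/\bG)$ restricted to $\pi(U)$ coincides with the composition of $\bG_x$-invariants $\cA b_U^{\bG_x}\to\cA b_U$ and push-forward along $U\twoheadrightarrow U/\bG_x\cong\pi(U)$. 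Since $\bG_x$ is finite, its invariants have derived functors $H^q(\bG_x,-)$ which commute with the stalk (filtered colimit) at $x$, giving
\[
(\cR^q\cF)_{\pi(x)}\cong H^q(\bG_x,\ \cF_x),
\]
which vanishes for $q>0$ by the hypothesis \Cref{eq:isotr.gp.coh.0}. Thus $\cR^q\cF=0$ for $q>0$, the second spectral sequence collapses to $R^pF(\cF)\cong H^p_{\Phi/\bG}(X/\bG,\cF/\bG)=0$ for $p>0$, and combining with the first factorization concludes $H^p\bigl(\bG,\Gamma_{\Supp(\Phi\bG)}(\cF)\bigr)=0$ for $p>0$. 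The delicate step is the slice construction and the stalkwise identification of $\cR^q$ with isotropy cohomology; everything else is formal manipulation of the two Grothendieck spectral sequences.
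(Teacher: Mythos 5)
Your argument is correct and is essentially the paper's own proof: the paper produces the single spectral sequence $H^p\left(\bG,\ H^q_{\Supp(\Phi\bG)}(\cF)\right)\Rightarrow H^{p+q}_{\Phi/\bG}(\cF/\bG)$ by citing Grothendieck's Th\'eor\`eme 5.3.1 under condition (D) (finite isotropy plus the slice neighborhoods you construct) together with a supported variant of his Proposition 5.2.3, and then lets the two softness hypotheses kill the $E_2$-page and the abutment exactly as you do. The only difference is that you unpack that citation --- your pair of composite-functor spectral sequences for $\Gamma_{\Supp(\Phi\bG)}(-)^{\bG}\cong\Gamma_{\Phi/\bG}(-/\bG)$ and the stalk identification $(\cR^q\cF)_{\pi(x)}\cong H^q(\bG_x,\cF_x)$ are precisely the content of the cited result, so your write-up is a more self-contained rendering of the same route rather than a new one.
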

\begin{proof}
  Most of the argument is already sketched in \Cref{res:soft.not.desc}\Cref{item:res:soft.not.desc:bo}. Bourbaki properness implies Grothendieck's \cite[\S 5.3, Condition (D)]{zbMATH03192990}: isotropy groups $\bG_x$, $x\in X$ are finite and
  \begin{equation*}
    \left(\forall x\in X\right)
    \left(\exists\text{ nbhd }V_x\ni x\right)
    \left(\forall g\in \bG\setminus \bG_x\right)
    \quad:\quad
    V_xg\cap V_x=\emptyset.
  \end{equation*}
  By \cite[Th\'eor\`eme 5.3.1]{zbMATH03192990} and (a supported variant of) \cite[Proposition 5.2.3]{zbMATH03192990} we have a spectral sequence
  \begin{equation*}
    E_2^{p,q}
    :=
    H^p\left(\bG,\ H^q_{\Supp(\Phi\bG)}(\cF)\right)
    \xRightarrow{\quad}
    H^{p+q}_{\Phi/\bG}(\cF/\bG),
  \end{equation*}
  and the two softness assumptions yield the conclusion. 
\end{proof}

Naturally, the cohomology vanishing constraint \Cref{eq:isotr.gp.coh.0} is automatic in the context of \cite[Lemma 2.4]{zbMATH00825443}, where all sheaves in sight are $\bR$-linear: higher finite-group cohomology valued in $\bQ$-vector spaces vanishes \cite[Proposition 9.40]{rot}.

\addcontentsline{toc}{section}{References}

\def\polhk#1{\setbox0=\hbox{#1}{\ooalign{\hidewidth
  \lower1.5ex\hbox{`}\hidewidth\crcr\unhbox0}}}

\Addresses

\end{document}